\documentclass[12pt]{article}
\usepackage[utf8]{inputenc}
\usepackage[T1]{fontenc}
\usepackage{comment}
\usepackage{adjustbox}
\usepackage{mathrsfs}

\usepackage{marvosym}
\usepackage{dsfont}
\usepackage{rotating}
\usepackage{amssymb}
\usepackage{amsmath}
\usepackage{amsthm}
\usepackage{thmtools}
\usepackage[colorlinks=true, linkcolor=blue, citecolor=blue, urlcolor=blue]{hyperref}
\usepackage{cleveref}
\usepackage{verbatim}
\usepackage{enumitem}
\usepackage{url}
\usepackage{float}
\usepackage{mathtools}

\usepackage[a4paper, margin=2.5cm,top=3cm, bottom=3cm]{geometry}

\setlist[enumerate,1]{label={\textnormal{(\roman*)}}}

\theoremstyle{plain}
\newtheorem{theorem}{Theorem}[section]
\newtheorem{lemma}[theorem]{Lemma}

\newtheorem{claim}[theorem]{Claim}
\newtheorem{conjecture}[theorem]{Conjecture}
\newtheorem{observation}[theorem]{Observation}

\theoremstyle{definition}
\newtheorem{definition}[theorem]{Definition}
\newtheorem{remark}[theorem]{Remark}

\newcommand{\oldqed}{}
\def\endofClaim{\hfill\scalebox{.6}{$\Box$}}

\newenvironment{claimproof}[1][Proof]{
	\renewcommand{\oldqed}{\qedsymbol}
	\renewcommand{\qedsymbol}{\endofClaim}
	\begin{proof}[#1]
	}{
	\end{proof}
	\renewcommand{\qedsymbol}{\oldqed}
}

\newcommand{\ceil}[1]{\left\lceil#1\right\rceil}

\newcommand\N{\mathbb{N}}
\newcommand\R{\mathbb{R}}

\newcommand{\coleq}{\coloneq}

\newcommand{\Z}{\mathbb Z}

\newcommand{\A}{\mathbb A}
\newcommand{\cA}{\mathcal A}

\newcommand{\f}{\varphi}
\newcommand{\g}{\psi_{S,k}}
\newcommand{\w}{\f\vert_S}
\newcommand{\bw}{\overline \omega}
\newcommand{\bwp}[2]{\overline\omega_{#1,#2}}
\newcommand{\bwsk}{\overline\omega_{S,k}}

\newcommand{\subs}{\subseteq}
\newcommand{\sm}{\setminus}
\newcommand{\abs}[1]{\mathopen{}\left\lvert#1\right\rvert\mathclose{}}

\newcommand{\abar}{\bar{a}}
\newcommand{\bbar}{\bar{b}}
\newcommand{\pbar}{\bar{p}}
\newcommand{\q}{\mathbf{q}}

\title{No extremal square-free words over alphabets of size at least 5}
\author{Eng Keat Hng\thanks{Extremal Combinatorics and Probability Group (ECOPRO), Institute for Basic Science, Daejeon, South Korea. Email: {\tt hng@ibs.re.kr}.} \and Silas Rathke\thanks{Fachbereich Mathematik und Informatik, Freie Universität Berlin, Arnimallee 3, 14195 Berlin, Germany. Email: {\tt s.rathke@fu-berlin.de}.}}
\date{}

\hypersetup{
  pdftitle={No extremal square-free words over alphabets of size at least 5},
  pdfauthor={Eng Keat Hng and Silas Rathke},
  pdfsubject={2020 MSC: Primary 68R15; Secondary 05A05, 05-08},
  pdfkeywords={combinatorics on words, extremal words, nonchalant words}
}

\begin{document}
	
	\maketitle
	
	\begin{abstract}
		A word over an alphabet $\mathbb A$ contains a square if it has a subword of the form $XX$ where $X$ is a word. A word $W$ is \emph{extremal square-free} if it does not contain a square, but it contains a square as soon as any letter of $\mathbb A$ is inserted at any position of $W$. Grytczuk, Kordulewski, and Niewiadomski conjectured that there are no extremal square-free words over alphabets of size at least 4. We prove this for alphabets of size at least 5. Our proof also implies that the sequence of \emph{nonchalant words} defined by Grytczuk, Kordulewski, and Niewiadomski is infinite and converges to an infinite word for all alphabets of size at least 5.
	\end{abstract}

	\medskip
    \noindent\textbf{Keywords.}
	Combinatorics on words; extremal words; nonchalant words.

	\smallskip
	\noindent\textbf{2020 Mathematics Subject Classification.}
	Primary 68R15; secondary 05A05, 05-08.
	
	\section{Introduction}
	
	An \emph{alphabet} is a finite set of symbols which are called \emph{letters}, and a \emph{word} is a finite sequence of letters drawn from an alphabet. A \emph{square} is a nonempty word of the form~$XX$, where~$X$ represents a nonempty word. A word is \emph{square-free} if it does not contain a square as a \emph{factor}, that is, a subword consisting of consecutive letters. It is easy to check that there are no binary square-free words with more than three letters. However, Thue~\cite{Thue1906} showed in 1906 that there are arbitrarily long ternary square-free words (see~\cite{Berstel}). His work~\cite{Thue1906,Thue1912} is considered to be the starting point of research in combinatorics on words~\cite{BerstelPerrin}.
	
	Recently, Grytczuk, Kordulewski, and Niewiadomski~\cite{GrytczukKordulewskiNiewiadomski} initiated the systematic study of extremal square-free words. We give a formal definition below.
	
	\begin{definition}
		Let~$W$ be a word over an alphabet~$\A$. An \emph{extension} of~$W$ is a word of the form~$W_1xW_2$ where~$W_1$ and~$W_2$ are (possibly empty) words such that~$W=W_1W_2$ and~$x\in\A$ is a single letter. An \emph{extremal square-free word} is a square-free word such that none of its extensions are square-free.
	\end{definition}
	
	The only binary extremal square-free words are $010$ and $101$. Grytczuk, Kordulewski and Niewiadomski~\cite{GrytczukKordulewskiNiewiadomski} showed that there are infinitely many ternary extremal square-free words. Mol and Rampersad~\cite{MolRampersad} concretely determined all possible lengths of ternary extremal square-free words. The following conjecture was posed in~\cite{GrytczukKordulewskiNiewiadomski,MolRampersad}.
	
	\begin{conjecture}[{\cite{GrytczukKordulewskiNiewiadomski,MolRampersad}}] \label{conj:no-extremal-squarefree-4}
		There is no extremal square-free word over an alphabet of size at least~$4$.
	\end{conjecture}
	
	The notion of \emph{nonchalant words} was introduced and discussed in~\cite{GrytczukKordulewskiNiewiadomski,GrytczukKordulewskiPawlik} as a concept closely related to extremal square-free words. The sequence~$(N_0,N_1,\dots)$ of nonchalant words over a linearly ordered alphabet~$(\A,\prec)$ is generated as follows. Let~$N_0$ be the empty word, and for each integer~$i\in\N$ we define~$N_i$ to be the square-free extension~$PxS$ of~$N_{i-1}=PS$ such that the suffix~$S$ of~$N_{i-1}$ is shortest possible and the letter~$x\in\A$ is earliest possible with respect to~$\prec$. If no such $S$ and $x$ exist, then the algorithm terminates, i.e.\@ the sequence is finite. Note that this can happen only if $N_{i-1}$ is extremal.
	
	Even though it is known that there are infinitely many extremal words over alphabets of size 3, it is conjectured that the sequence of nonchalant words is infinite.
	
	\begin{conjecture}[\cite{GrytczukKordulewskiNiewiadomski,GrytczukKordulewskiPawlik}]\label{conj:nonchalant-infinite}
		For any alphabet~$\A$ of size~$k\ge3$ with a linear ordering~$\prec$, the sequence of nonchalant words over~$(\A,\prec)$ is infinite.
	\end{conjecture}
	
	Even if the sequence of nonchalant words is infinite, it still might not converge to an infinite word. Therefore, the following conjecture is even stronger than the previous one.
	
	\begin{conjecture}[\cite{GrytczukKordulewskiNiewiadomski,GrytczukKordulewskiPawlik}] \label{conj:nonchalant-convergence-3}
		For any alphabet~$\A$ of size~$k\ge3$ with a linear ordering~$\prec$, the sequence of nonchalant words over~$(\A,\prec)$ converges to an infinite word.
	\end{conjecture}
	
	Hong and Zhang~\cite{HongZhang} confirmed Conjectures~\ref{conj:no-extremal-squarefree-4} and~\ref{conj:nonchalant-convergence-3} for alphabets of size at least~$17$.
	
	\begin{theorem}[\cite{HongZhang}]
		For any integer~$k\ge17$, there is no extremal square-free word over an alphabet of size~$k$.
	\end{theorem}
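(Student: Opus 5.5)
Since $W$ is finite and the alphabet is large, I plan to insert a suitable letter at the very end of $W$, or near the end, and show that at least one choice of letter creates no square; this already rules out extremality. Write $W=w_1\cdots w_n$ over $\A$ with $|\A|=k\ge17$, and suppose towards a contradiction that $W$ is extremal square-free. I would consider only extensions of the form $W x$, i.e.\ appending a letter $x$ at the end. If $Wx$ contains a square, that square must be a suffix $YY$ of $Wx$, since $W$ itself is square-free. So each letter $x$ is \emph{blocked} by some period $p\ge1$ with $w_{n-p+1}\cdots w_n x$ equal to the second half of a square of half-length $p$. Concretely, $x=w_{n-p+1}$ and the suffix of $W$ of length $2p-1$ has period $p$.

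The first step is to count how many letters can be blocked this way. Each blocking period $p$ forbids exactly one letter, namely $w_{n-p+1}$. Thus it suffices to show that fewer than $k$ distinct periods $p$ can simultaneously make the suffix of length $2p-1$ of the square-free word $W$ $p$-periodic. If $p<q$ are two such periods, the suffix of length $2p-1$ has both periods $p$ and $q$ whenever $2q-1\ge 2p-1$, and nesting gives strong constraints. I would use a Fine–Wilf type argument: if $p+q-\gcd(p,q)\le 2p-1$, then the suffix has period $\gcd(p,q)$, and square-freeness of a long periodic word fails. So consecutive admissible periods must grow geometrically, roughly $q\ge$ const$\cdot p$.

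Growth alone does not bound the number of periods, because $W$ can be long. I therefore expect to refine the strategy: insert the letter not only at the end but at a well-chosen position, exploiting that $W$ is extremal for \emph{all} positions simultaneously. I also need to handle squares that straddle the inserted letter with it in the middle. Following the spirit of Hong–Zhang, I would fix a position $i$ and a letter $x$ and bound the number of squares $YY$ containing the new letter that could be created. These squares fall into two cases: $x$ lies in the first copy of $Y$, or in the second. In each case the square forces $x$ to equal a specific letter of $W$ at distance $|Y|$, and the remaining part forces a long repetition in $W$. Square-freeness of $W$ combined with the periodicity lemma then shows that for each position only a bounded number (at most $16$) of letters are forbidden, via a counting over dyadic scales of $|Y|$.

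The main obstacle is this uniform bound on the number of forbidden letters, independent of $n$. Periods at different scales can each forbid a letter, and a naive dyadic count gives $\log n$ forbidden letters, not a constant. To get a constant I would choose the insertion position cleverly, for instance right after the first letter, or at a position where the local structure prevents large-scale squares. Then, using overlap arguments, I would show that two blocking squares of comparable length around the same position produce a square inside $W$, leaving only a constant number of scales. Making this constant at most $16$ is where the careful case analysis lies.
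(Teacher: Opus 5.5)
There is a genuine gap, and it is the step you flag as the main obstacle. Nothing in the proposal produces a gap of $W$ with at most $16$ forbidden letters, and the per-position bound you state is false. Take distinct letters $a_1,\dots,a_m$ and the Zimin words $Z_0$ empty, $Z_{j+1}=Z_ja_{j+1}Z_j$. Each $Z_j$ is square-free: its middle letter occurs once, so any square lies inside a copy of $Z_{j-1}$. Moreover, $Z_m$ ends with $Z_ja_{j+1}Z_j$ for every $0\le j<m$, so appending $a_{j+1}$ completes the square $(Z_ja_{j+1})^2$. Hence at the last gap of this square-free word of length $n=2^m-1$, all $m$ letters are forbidden, one per dyadic scale, exactly as your naive count predicts. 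This is why your first strategy (appending at the end) cannot work. Your fallback of ``choosing the insertion position cleverly'' comes with no mechanism for finding a good gap. Since extremality is assumed at every gap and some gaps genuinely carry unboundedly many forbidden letters, local periodicity at one fixed gap cannot supply it; the argument must use information from many gaps at once.

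The missing idea is to average over all gaps rather than search for a good one. The proof (Hong--Zhang's, and the sharper version in this paper) double-counts. Extremality provides, for each of the $k(n+1)$ pairs (gap, letter), a canonically chosen witness (here a minimal square-completing quadruple $(a,\ell,b,c)$), so $|\mathcal A(W)|\ge k(n+1)$. The witnesses are then organised by sign, half-length $\ell$ and \emph{starting position} $a$, not by the gap $b$. Your heuristic that two blocking squares of comparable length force a square in $W$ is the right local ingredient. However, it must be applied to witnesses whose starting positions are close, and these typically belong to different gaps. Two distinct minimal sign-$1$ witnesses with comparable $\ell,\ell'$ that start too close together either create a square in $W$ or contradict minimality; this is the content of the lower bounds on $\varphi(\ell,\ell')$ in Lemma~\ref{lem:gap-lower-bound}. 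Consequently only $O(n/L)$ witnesses have $\ell\in[L,\tfrac{3}{2}L)$, and these contributions form a geometric series over scales. One obtains $|\mathcal A(W)|\le Cn+O(1)$ with $C<17$ (indeed $C<4.9$ in this paper), which contradicts $|\mathcal A(W)|\ge k(n+1)$, using if needed an elementary lower bound such as $n\ge k+2$. Large-scale witnesses can pile up at a few gaps, as in the Zimin example, but not on average, and that is exactly what the counting exploits.
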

	
	\begin{theorem}[\cite{HongZhang}]
		For any alphabet~$\A$ of size~$k\ge17$ with a linear ordering~$\prec$, the sequence of nonchalant words over~$(\A,\prec)$ converges to an infinite word.
	\end{theorem}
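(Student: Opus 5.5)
We need to prove that for every $k\ge17$ the nonchalant sequence over a $k$-letter alphabet converges to an infinite word. Convergence means that, for every fixed length $L$, the prefix of length $L$ of $N_i$ eventually stops changing. Since every $N_i$ is an extension of $N_{i-1}$, the only way a fixed prefix can change is that the insertion point lies inside it.

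The plan is to show something stronger: once a word is long enough, a square-free extension can always be found by inserting a letter within a bounded distance of the end. Concretely, I will show there is a constant $C$ (depending only on $k\ge17$) such that every square-free word $W=PS$ with $|S|\le C$ admits a square-free extension $PxS$.

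Given this claim, two consequences follow.

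\begin{itemize}
\item The sequence never terminates. In particular there are no extremal square-free words, which is the previous theorem.
\item In every step the chosen suffix $S$ has length at most $C$. Hence the prefix of length $|N_i|-C$ of $N_i$ is never touched again. Since $|N_i|=i\to\infty$, every fixed prefix eventually stabilises, and the limit is an infinite word.
\end{itemize}

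For the claim I use a counting argument. Fix a square-free $W$. For each position among the last roughly $C$ gaps and each letter $x$, the extension $PxS$ fails only if it contains a square, and any such square must contain the inserted $x$.

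Consider a square $YY$ in $PxS$ that contains $x$. Deleting $x$ leaves a factor of $W$. This means $W$ has a factor $UV$ (or $VU$) where $YY$ is obtained by inserting $x$ at a specific place. For a fixed insertion position and a fixed half-length $m$, there are at most two possible alignments, depending on whether $x$ lies in the first or the second copy of $Y$. Each alignment determines the letter $x$ uniquely: it must equal the letter at the corresponding position of the other copy.

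So, naively, each pair (position, length) forbids $O(1)$ letters. Summed over all lengths this gives unboundedly many forbidden letters, which is useless on its own.

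The key refinement is that long squares through $x$ force $W$ to contain near-squares, and square-freeness of $W$ limits these. If $x$ sits at the boundary between the two halves, then $W$ itself contains $YY$ with one letter removed, a "square with a hole". For $x$ in the interior, $W$ contains a factor $AB\,A'B$ where $A'$ is $A$ with its last letter deleted. I would show that for a fixed insertion position, the lengths $m$ that forbid distinct letters are sparse. Roughly, if two lengths $m<m'$ both produce squares, then either $m'\ge 2m$, or $W$ contains a square. The latter follows by comparing the two overlapping periodic structures, as in the Fine–Wilf theorem.

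Without the doubling, lengths for one position would still run up to $|W|$. Instead I would count over a window of $C$ consecutive positions and use an averaging or potential argument. A forbidding square of half-length $m$ at position $p$ tends to forbid the same letter, or be "reused", at nearby positions $p\pm1$. So the total number of (position, letter) pairs that can be forbidden in a window of length $C$ is at most about $16C$, whereas there are $kC\ge17C$ candidate pairs. This yields a free pair.

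The main obstacle is exactly this bound. I need to show that, across a window of positions, the forbidden letters coming from long squares are shared, so that the count is at most $16$ per position on average. The constant $17$ should come out of carefully enumerating the short-square cases (half-length $1$ or $2$, each forbidding up to two letters per side) and bounding the long-square contribution via the periodicity lemma. I would first try to handle squares with $m\le C$ by direct counting and squares with $m>C$ by showing they force a square in $W$, choosing $C$ to balance the two.
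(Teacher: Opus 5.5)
There is a genuine gap. Your deduction of convergence from a constant-size window is fine. The window claim itself is unproved and carries the whole weight of the theorem: for some $C=C(k)$, fewer than $kC$ of the (gap, letter) pairs in the last $C$ gaps of any square-free $W$ are blocked. (As written, your claim says every split $W=PS$ with $|S|\le C$ works; you mean some split.) The route you indicate fails on concrete examples:
\begin{itemize}
\item \emph{Long squares near the end.} Every square completed in the last $C$ gaps with the inserted letter in the \emph{first} half has half-length at most about $C$. So all long squares there have the inserted letter in the second half, and these do not force a square in $W$. For $Y'$ square-free over $\{1,2,3\}$, the word $W=Y'4Y'$ is square-free, yet inserting $4$ at gap $n$ gives $(Y'4)^2$ with $|Y'|$ arbitrary.
\item \emph{Your doubling claim.} Take $W=Y'4Y'Z'5Z'$ with $Y'Z'$ ternary square-free. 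It is square-free, since $4$ and $5$ each occur once. At the gap between the second $Y'$ and the first $Z'$, inserting $4$ resp.\ $5$ completes squares of half-lengths $|Y'|+1$ and $|Z'|+1$. These can be in any ratio, e.g.\ $11$ and $9$, because one square extends to the left and the other to the right.
\item \emph{One orientation only.} A Fine--Wilf argument gives at best geometric growth of the lengths. That bounds the number of long lengths by $O(\log n)$, not $O(1)$. With $C$ fixed and $n\to\infty$, nothing in your sketch bounds their contribution independently of $n$.
\end{itemize}
The appeal to an unspecified ``averaging or potential argument'' and to ``reuse at $p\pm1$'' does not supply this bound.

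The missing observation is that you do not need a constant window. A window whose length is a fixed fraction $\alpha<1$ of $n$ already gives convergence, and this needs only a \emph{global} count. The paper argues this way for $k\ge5$, which contains the present statement:
\begin{enumerate}
\item Each blocked pair $(b,c)$ has a minimal square-completing quadruple, and $|\mathcal A(W)|<4.896n+12.73$ for every square-free $W$ of length $n$.
\item If the last $g$ gaps were all fully blocked, then $kg\le|\mathcal A(W)|$. So the nonchalant insertion happens within the last $(4.896n+12.73)/k+1$ gaps.
\item Since $4.896<k$, a prefix of length linear in $n$ is frozen. For $k\ge5$ this means the prefix of length $m$ no longer changes once $n\ge 49m+171$.
\end{enumerate}
Globally, long squares can be amortised. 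Minimal quadruples of sign $1$ with lengths in $[a,\frac32 a)$ have start positions spaced on average by $\Omega(a)$, by the lower bounds on $\varphi$ and the minimum-mean-cycle lemma. This gives $O(n/a)$ per scale and a convergent geometric sum, and sign $-1$ follows by reversing the word. Inside a window of fixed size this amortisation is unavailable, which is exactly where your plan breaks. For $k\ge17$ a much cruder global constant than $4.896$ would already suffice.
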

	
	Our main results improve on the results of Hong and Zhang and reduce the lower bound on the alphabet size from~$17$ to~$5$; this confirms Conjectures~\ref{conj:no-extremal-squarefree-4} and~\ref{conj:nonchalant-convergence-3} for alphabets of size at least~$5$.
	
	\begin{theorem} \label{thm:no-extremal-squarefree-5}
		For any integer~$k\ge5$, there is no extremal square-free word over an alphabet of size~$k$.
	\end{theorem}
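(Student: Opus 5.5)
We argue by contradiction. Suppose $W$ is a square-free word over an alphabet $\A$ of size $k\ge5$. We will exhibit a position in $W$ and a letter $x\in\A$ whose insertion keeps the word square-free. It suffices to treat $k=5$: if $W$ uses at most five letters, restrict to a $5$-letter subalphabet containing them, since any square-free extension found there is also an extension over the full alphabet.

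\textbf{Step 1: a letter only creates short squares.} Fix a position, writing $W=W_1W_2$, and consider the extension $W_1xW_2$. Any square in $W_1xW_2$ must contain the inserted $x$, because $W$ itself is square-free. We call a square \emph{short} if its period is small, say $|X|\le 2$, and \emph{long} otherwise. Short squares are excluded by requiring that $x$ differs from the (up to four) letters adjacent to the insertion point within a small window. This should leave at least one candidate letter, and more if we choose positions near which few distinct letters occur.

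\textbf{Step 2: counting long squares.} The central quantity is how many letters $x$ can create a long square at a given position. A long square $XX$ containing $x$ has a prescribed shape. For example, if $x$ lies in the second half, then $W_1$ ends with $X'$ and $W_2$ begins with the remainder, where $X=X'x X''$ is determined by the period. So for each period $p$ there is at most one letter $x$ that works, namely the letter of $W$ sitting $p$ positions back (or forward).

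Square-freeness of $W$ strongly restricts which periods can occur simultaneously. Two long squares through the same insertion point with close periods would force a periodic structure, and hence a square, inside $W$. This is a Fine--Wilf or synchronization type argument.

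The plan is to show that at some insertion point, the set of letters that are blocked by long squares, together with the letters blocked by short squares, has size at most $4$. Rather than working at a single position, I would use a \emph{weighting/averaging argument over positions}. Choose a block of consecutive insertion points, for instance near the end of $W$, in the spirit of the nonchalant process. Assign to each (position, period) pair the letter it blocks, and show that on average fewer than $5$ letters are blocked per position. For a period $p$, the square through the insertion point is determined by matching the stretch of $W$ at distance $p$, so each period can block only a bounded number of consecutive positions before creating a square in $W$ itself.

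\textbf{Step 3: the main obstacle.} The hard step is the simultaneous control of all periods in Step 2. Hong and Zhang's counting gives the bound $17$. Getting down to $5$ requires a sharper accounting: for each period, track precisely which positions in a window it can block, and use the square-freeness of $W$ to show that large periods rarely coincide. I would first try a potential function on suffixes. Insert near the end of $W$, so that only squares whose second half ends inside a short suffix matter. Then argue that a long square blocking letter $x$ at position $i$ forces the suffix after $i$ to be a prefix of a copy occurring earlier. Such copies for different periods can coexist only in limited ways, because otherwise $W$ would contain a square.

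The remaining finite case analysis, over small windows and short periods, I would settle by computer enumeration of square-free words over $5$ letters up to a bounded length. This reduces the theorem to a finite check plus the structural lemma for long periods.
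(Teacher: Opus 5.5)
\textbf{There is a genuine gap: Steps 2 and 3 never establish the average-below-5 bound, and that bound is the whole theorem.} Your overall frame is the paper's frame. Extremality forces $\abs{\cA(W)}\ge k(n+1)$, so you want to show that, averaged over the $n+1$ gaps, fewer than $5$ letters per gap complete a square; the paper proves $\abs{\cA(W)}<4.896n+12.73$. Two quantities show why this is hard. Period $1$ alone accounts for exactly $2n$ blocked pairs. A per-period bound (witnesses of a fixed period $\ell$ and fixed sign number about $n/\ell$) sums to a divergent harmonic series. So you need a sharp bound on how densely witnesses of \emph{different} periods can sit at \emph{different} gaps.

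Your synchronisation heuristic about two squares through the same insertion point does not provide this, and the Fine--Wilf intuition fails exactly where it matters. The best pairwise spacing available, \Cref{lem:gap-lower-bound}, gives only $\varphi(\ell,\ell')\ge0$, i.e.\ no spacing at all, when $\ell'=\ell\pm1$, $\ell'\ge\frac32\ell$ or $\ell'\le\frac23\ell$. The authors' computations suggest these values are exact. This is why the paper splits periods by parity and into blocks of ratio $\frac32$.

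You also need a canonical witness for each blocked pair $(b,c)$, namely the paper's minimal square-completing quadruple (shortest period, then extremal start). Many squares can certify the same pair, and the paper's spacing lemmas use minimality essentially. Even with all this, the pairwise argument only handles $\ell\ge40$, giving total $\frac{3n}{5}$. For $2\le\ell\le39$ the paper needs a computer evaluation of maximum mean cycle weights in the graph $G_k(S)$. Its vertices are realisable windows $(S_1,\dots,S_k)$ recording which periods have a minimal witness starting at each of $k$ consecutive positions. The resulting coefficient $4.896$ clears $5$ only barely, so an unspecified ``sharper accounting'' cannot be taken on faith.

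\textbf{The surrounding steps also contain concrete errors.}

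\emph{The reduction to $k=5$ is invalid.} An extremal word over a $k$-letter alphabet must contain every letter, since otherwise you can insert a missing letter anywhere. So for $k\ge6$ it never lies in a $5$-letter subalphabet. The correct reason $k=5$ is the hardest case is that the upper bound on blocked pairs is independent of the alphabet, whereas extremality requires $k(n+1)$ of them.

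\emph{Step 1 buys essentially nothing.} At gap $3$ of the square-free word $212343$, periods $1$ and $2$ alone already block the letters $1,2,3,4$. One letter of slack remains, and everything rests on Step 2.

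\emph{Computer enumeration alone does not reduce the theorem to a finite check.} Enumerating square-free words over $5$ letters up to bounded length needs a local-to-global mechanism that turns finitely many local configurations into a density bound valid for every $n$. In the paper this mechanism is the closed walk $X_1,\dots,X_{n^*}$ in $G_k(S)$ together with \Cref{lem:bound-max-mean-weight-cycle}.

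\emph{Small $n$ needs its own argument.} Any such density bound carries an additive constant. The paper first shows $n\ge k+2$. It then bootstraps, using that a square-completing quadruple of period $\ell$ needs $n\ge2\ell-1$, from $n\le74$ to $n\le10$ to $n\le k+1$.
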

	
	\begin{theorem} \label{thm:nonchalant-converges-5}
		For any alphabet~$\A$ of size~$k\ge5$ with a linear ordering~$\prec$, the sequence of nonchalant words over~$(\A,\prec)$ converges to an infinite word.
	\end{theorem}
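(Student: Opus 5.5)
The idea is to show that the nonchalant algorithm never has to reach far back. The key claim is a \emph{locality lemma}. Fix $k\ge5$, and let $W$ be a square-free word over $\A$ with $|\A|=k$. Then some square-free extension $PxS$ of $W=PS$ exists with $|S|\le c$, for an absolute constant $c$; we aim for something like $c\le 2$ or $3$. Given this, Theorem~\ref{thm:nonchalant-converges-5} follows. Every step of the algorithm succeeds, and each step modifies only the last $c$ positions. The key point is that the chosen suffix length is bounded \emph{uniformly}, not merely that some extension exists. Consequently, after $N_i$ has length $n$, the prefix of length $n-c$ is never altered again, except through the cumulative effect of later insertions. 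To make this rigorous I would track, for each position $p$, the number of insertions occurring before $p$. Every insertion happens within distance $c$ of the current end, and the word grows by one letter each step. So any fixed prefix is eventually frozen, and the words converge letterwise to an infinite word.

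To prove the locality lemma, try inserting a letter $x$ at the end (so $S$ is empty), or just before the last letter, or before the last two letters. For each such candidate $PxS$, a square in $PxS$ must contain the new letter $x$. First consider insertion at the very end. A square that is a suffix of $Wx$ is determined by its half-length $\ell$, and it forces $x$ to equal the letter at distance $\ell$ back, namely $W_{n-2\ell+1}$ in $1$-indexed form. So $x$ is blocked precisely when some $\ell$ makes $W_{n-2\ell+2..n}x$ a square.

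The main obstacle is that many different $\ell$ can block different letters, so the simple count "the last letter blocks one and the rest block at most $k-1$" fails. Both the alphabet size $5$ and the choice to use several insertion positions come from a counting argument. I would show that if all $k$ letters are blocked at the end and also at the second-to-last and third-to-last positions, then the blocking squares overlap heavily. Consider two blocking squares $UU$ and $VV$ with $|U|<|V|$, both suffixes of $Wx$ and $Wy$ respectively with $x\ne y$. Then $W$ has two long suffix repetitions. Using the Fine–Wilf periodicity lemma, the prefix-suffix structure forces $W$ itself to contain a square unless the half-lengths satisfy strong growth conditions, e.g.\ $|V|\ge 2|U|$ roughly. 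So the blocking half-lengths grow geometrically. That alone is not a contradiction, since $W$ is finite but arbitrary. I would therefore combine constraints across the different insertion positions, comparing the blocking squares for position $n$ with those for position $n-1$. The goal is to derive that $W$ contains a square when $k\ge5$. This case analysis is the heart of the proof, and I expect it to need a careful, possibly computer-assisted, enumeration of short configurations. Fine–Wilf, or a direct argument, would handle the long ones.

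Finally, the statement itself, Theorem~\ref{thm:nonchalant-converges-5}, also yields Theorem~\ref{thm:no-extremal-squarefree-5} as an immediate consequence of the locality lemma. The present theorem additionally needs the freezing argument above, which is routine once the uniform bound $c$ is in hand.
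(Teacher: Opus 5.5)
Your argument depends entirely on the locality lemma: a uniform constant $c$ such that every square-free word over $k\ge5$ letters has a square-free extension $PxS$ with $\abs{S}\le c$. That lemma is not proved. The step you call the heart of the proof is only an expectation that pairwise Fine--Wilf constraints at the last two or three gaps will force a square once $k\ge5$, and nothing in the sketch explains why five letters should suffice locally.

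There is also reason to doubt the lemma:
\begin{enumerate}
\item A single gap can be fully blocked over any alphabet. The Zimin word $Z_k$ over $x_1,\dots,x_k$ (with $Z_1=x_1$ and $Z_i=Z_{i-1}x_iZ_{i-1}$) ends with $Z_{i-1}x_iZ_{i-1}$ for every $i$. So appending $x_i$ completes the square $(Z_{i-1}x_i)^2$.
\item Blocking squares for gaps near the end may have half-lengths comparable to $n$ and may start anywhere in the long prefix. Your pairwise analysis is close in spirit to the paper's lower bounds on $\varphi(\ell,\ell')$, and it does give the geometric growth you anticipate. But such pairwise constraints only limit how \emph{many} blocking quadruples there are on average, not \emph{where} the unblocked gaps sit.
\item Even with a large computer search, the paper only obtains an averaged bound of roughly $4.9$ blocked pairs $(b,c)$ per gap. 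That does not exclude long runs of fully blocked gaps at the end of $W_n$.
\end{enumerate}
So your key lemma is much stronger than anything currently available, and the proposal gives no route to it.

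It is also unnecessary. For convergence it suffices that the gap at which $W_{n+1}$ is obtained from $W_n$ tends to infinity with $n$. The paper gets this from a global count. By Lemmas~\ref{lem:bound-40-infty} and~\ref{lem:bounds-2-13-26-39} together with $\abs{\cA_1(W)}=2n$, every square-free word of length $n$ satisfies $\abs{\cA(W)}<4.896n+12.73$. Suppose the last $t$ gaps of $W_n$ are all fully blocked. Then each of the $kt\ge5t$ pairs $(b,c)$ with $b$ among these gaps carries its own minimal square-completing quadruple, so $5t<4.896n+12.73$. Hence the insertion happens outside a prefix of length roughly $0.02n-4$, and the prefix of length $m$ is frozen once $n\ge49m+171$. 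That the sequence is infinite at all comes from Theorem~\ref{thm:no-extremal-squarefree-5}.

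Your freezing argument goes through verbatim with this linear bound in place of the constant $c$. The missing ingredient is therefore a global bound of the form $\abs{\cA(W)}\le(k-\varepsilon)n+O(1)$, not a local impossibility statement at the last few gaps.
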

	
	\section{The proof}
	
	We first introduce the notation and terminology we need, which are mostly identical to those in~\cite{HongZhang}. Write~$\N$ for the set of positive integers and~$\N_0$ for the set~$\N\cup\{0\}$. For~$m\in\N_0$, write~$[m]$ for the set~$\{1,\dots,m\}$ and~$[m]_0$ for the set~$[m]\cup\{0\}$. Let~$W$ be a word of length~$n$. Enumerate its letters from left to right as~$w_1,\dots,w_n$. For~$i\in[n]$, let~$W[i]$ denote the letter~$w_i$. Given integers~$1\le a<b\le n+1$, let~$W[a,b)$ denote the factor~$w_a\dots w_{b-1}$ of~$W$. For~$i\in[n-1]$, we call the space between letters~$w_i$ and~$w_{i+1}$ \emph{gap~$i$}; we call the spaces to the left of~$w_1$ and to the right of~$w_n$ \emph{gap~$0$} and \emph{gap~$n$} respectively.
	
	\begin{definition}[\cite{HongZhang}]
		Let~$W$ be a word of length~$n$ over an alphabet~$\A$. For~$b\in[n]_0$ and~$c\in\A$, let~$W+_bc$ be the word obtained by inserting the letter~$c$ at gap~$b$ in~$W$. For~$a\in[n]$, $\ell\in\N$, $b\in[n]_0$ and~$c\in\A$, the quadruple~$(a,\ell,b,c)$ is \emph{square-completing in~$W$} if~$(W+_bc)[a,a+\ell)=(W+_bc)[a+\ell,a+2\ell)$. The \emph{sign} of a square-completing quadruple~$(a,\ell,b,c)$ in~$W$ is~$1$ if~$b\le a+\ell-2$ and~$-1$ otherwise; this indicates whether the letter~$c$ we inserted at gap~$b$ appears in~$(W+_bc)[a,a+\ell)$ or~$(W+_bc)[a+\ell,a+2\ell)$.
	\end{definition}
	
	We introduce the key new concept of a \emph{minimal} square-completing quadruple. This helps us achieve better bounds in the proof and attain the value 5 in \Cref{thm:no-extremal-squarefree-5}.
	
	\begin{definition}
		A square-completing quadruple~$\q=(a,\ell,b,c)$ in a word~$W$ is \emph{minimal} if for every square-completing quadruple~$\q'=(a',\ell',b,c)$ in~$W$ one of the following statements holds.
		\begin{enumerate}[label=(M\arabic{*})]
			\item \label{item:mscq-min-ell} $\ell'>\ell$.
			\item \label{item:mscq-same-ell-split-signs} $\ell'=\ell$ and~$\q$ and~$\q'$ have different signs in~$W$.
			\item \label{item:mscq-same-ell-sign-positive} $\ell'=\ell$, both~$\q$ and~$\q'$ have sign~$1$ in~$W$ and~$a'\le a$.
			\item \label{item:mscq-same-ell-sign-negative} $\ell'=\ell$, both~$\q$ and~$\q'$ have sign~$-1$ in~$W$ and~$a'\ge a$.
		\end{enumerate}
	\end{definition}
	\begin{remark}
		The definition quickly implies the following:
		if $W$ is a word of length $n$ over an alphabet $\A$ and $(b,c)\in [n]_0\times \A$ is fixed, then if there is a square-completing quadruple of the form $(a,\ell,b,c)$, then there is also a minimal square-completing quadruple with $b$ and $c$ as the third and fourth entries. Furthermore, there are at most two minimal square-completing quadruples with $b$ and $c$ as the third and fourth entries, and if there are two, then both have the same second entry $\ell$ and different signs.
	\end{remark}
	
	Let~$W$ be a word. Write~$\cA(W)$ for the set of all minimal square-completing quadruples in~$W$. For~$\ell\in\N$, let~$\cA_\ell(W)$ be the set of all elements of~$\cA(W)$ with second entry~$\ell$; write~$\cA^{+}_\ell(W)$ for the set of all elements of~$\cA_\ell(W)$ with sign~$1$ and~$\cA^{-}_\ell(W)$ for the set of all elements of~$\cA_\ell(W)$ with sign~$-1$. 
	
	Our proof proceeds by assuming that an extremal square-free word $W$ over an alphabet of size at least 5 exists, deriving bounds on the size of $\cA(W)$, and showing that these bounds yield a contradiction. In our proof, it is convenient to sort the elements of~$\cA(W)$ according to the value of the second entry~$\ell$. We now state our main lemmas, which give bounds on various sums of sizes of sets~$\cA_{\ell}(W)$. The first lemma deals with sets~$\cA_{\ell}(W)$ with~$\ell\ge40$ and the second lemma handles~$2\le\ell\le39$.
	
	\begin{lemma}\label{lem:bound-40-infty}
		For every square-free word~$W$ of length~$n$, we have
		\[\sum_{\ell=40}^\infty\abs{\cA_\ell(W)}\le \frac{3n}{5}\,.\]
	\end{lemma}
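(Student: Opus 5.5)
The plan is a charging argument. I would assign each minimal square-completing quadruple with $\ell\ge40$ to a position of $W$, and then show that only a small proportion of positions can be charged. I would handle the two signs separately and aim for
\[\sum_{\ell\ge40}\abs{\cA^{+}_\ell(W)}\le\frac{3n}{10}\quad\text{and}\quad\sum_{\ell\ge40}\abs{\cA^{-}_\ell(W)}\le\frac{3n}{10}.\]
The two cases are mirror images: reversing $W$ swaps the signs and preserves square-freeness. Minimality is compatible with this, because \ref{item:mscq-same-ell-sign-positive} and \ref{item:mscq-same-ell-sign-negative} are mirror conditions. So it suffices to treat sign $1$.

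For $\q=(a,\ell,b,c)$ with sign $1$, the second half $X=W[a+\ell-1,a+2\ell-1)$ is an untouched factor of $W$ of length $\ell$. The first half is $X$ with one letter deleted, and it occurs in $W$ immediately to the left of $X$. I would charge $\q$ to its right endpoint $e(\q)=a+2\ell-1$. The remark after the definition of minimality says that for a fixed pair $(b,c)$ there is at most one sign-$1$ minimal quadruple. Moreover, for fixed $b$, the letter $c$ is determined as $W[b+\ell]$ once $\ell$ and $a$ are known. The quantity to control is therefore how many quadruples share an endpoint or have nearby endpoints.

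The core step is an overlap lemma. Suppose $\q,\q'$ are sign-$1$ minimal quadruples with $\ell\le\ell'$ and endpoints $e,e'$ with $\abs{e-e'}=d$. Then the two near-square structures overlap on a long common stretch. In this stretch $W$ agrees with a shift of itself by $\ell'-\ell\pm d$, up to the single deletion in each first half. If this shift $p$ is small relative to the overlap, then a Fine--Wilf type argument produces a factor of period $p$ and length at least $2p$ inside $W$, that is, a square. This contradicts square-freeness. The same reasoning should also rule out two quadruples whose deleted positions $b,b'$ are close with $\ell=\ell'$; minimality (choose smallest $\ell$, then extremal $a$) is exactly what excludes the degenerate coincidences. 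The conclusion I aim for is: among sign-$1$ minimal quadruples with $\ell\in[L,2L)$, endpoints are pairwise at distance at least roughly $cL$. I would also try to obtain an absolute bound on how many of them can lie in any window of length $L$.

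Summing over dyadic (or finer, e.g.\ ratio $1+\varepsilon$) scales $L\ge40$ then gives a geometric-type bound on the number of charges per position, which should come out at most $3/10$ per sign. The threshold $40$ is where the additive error terms, from the single inserted letter and the rounding in the Fine--Wilf step, become negligible enough to reach the constant $3/10$. The main obstacle is the overlap lemma in the regime where $\ell'$ is noticeably larger than $\ell$ but $e'$ is still close to $e$. There the shift is not small compared with the overlap, so a direct square does not appear. I would first try to use the occurrence of $X$ inside $X'$ together with the deletion structure to find a repeated factor adjacent to itself. If that fails, I would fall back on a finer charging: charge to $e-\lfloor\ell/2\rfloor$ instead of $e$, to spread out quadruples of different scales.
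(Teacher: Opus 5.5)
Your reduction to sign $1$ by reversing $W$ is fine; it is exactly \Cref{obs:flip-sign-bounds}. The gap is the overlap lemma that everything else rests on. The claim that sign-$1$ minimal quadruples with lengths in $[L,2L)$ have endpoints pairwise $\Omega(L)$ apart is false.

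Take any $\ell\ge2$, let $B$ be a word of $\ell-1$ distinct letters, let $x,y$ be two further letters, and set $W=B\,x\,B\,y\,x$. This word is square-free, since $y$ occurs once in $W$ and $x$ occurs once in $BxB$. Inserting $x$ at gap $0$ gives $(xB)(xB)yx$. Inserting $y$ at gap $\ell-1$ gives $(Byx)(Byx)$. So $(1,\ell,0,x)$ and $(1,\ell+1,\ell-1,y)$ are square-completing of sign $1$. They are minimal, because a square through the inserted letter must match it with another occurrence of that letter at distance equal to the half-length, and here this leaves only these two squares. Their endpoints $2\ell$ and $2\ell+2$ differ by $2$ for every $\ell$; in particular $\varphi(\ell,\ell+1)=0$.

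On the overlap, both structures impose the same relation $W[i]=W[i+\ell]$ for $1\le i\le\ell-1$. The relative shift is therefore $0$, and there is nothing for a Fine--Wilf argument to contradict. Moving the charging point to $e-\lfloor\ell/2\rfloor$ does not help either, since the two quadruples share their start.

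More generally, the true pairwise constraints are far from uniform. \Cref{lem:gap-lower-bound} gives a start-separation that:
\begin{itemize}
\item drops from about $\ell$ to $2$ as $\ell'/\ell$ rises to $\frac43$;
\item is only $1$ for $\frac43\ell<\ell'<\frac32\ell$;
\item is $0$ for $\ell'=\ell\pm1$ and outside $(\frac23\ell,\frac32\ell)$.
\end{itemize}
The authors report that these bounds appear to be sharp.

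So no packing argument based on uniform pairwise spacing can work. Two things are missing from your sketch. The first is the precise piecewise pairwise bound, which is the technical core of the paper (\Cref{lem:diff-length-mscq-outcomes} and \Cref{lem:gap-lower-bound}). The second is an amortization showing that small separations cannot occur repeatedly.

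The paper orders the sign-$1$ quadruples by start, so that $a_{i+1}-a_i\ge\varphi(\ell_i,\ell_{i+1})$. It then views the length sequence as a closed walk in the complete digraph on the length set, weighted by $\varphi$. The count is bounded by roughly $n$ divided by the minimum mean cycle weight (\Cref{lem:bound-min-mean-weight-cycle}). Restricting to lengths of one parity in $[a,\frac32a)$ removes every pair whose lower bound is $0$. An explicit potential function then shows every cycle has mean weight at least $\frac{a+1}{2}$ (\Cref{lem:bound-same-parity-twothirds}). Both parities together give $\frac{4n}{a}$ for lengths in $[a,\frac32a)$. Summing over $a=\lceil40(3/2)^j\rceil$ gives exactly $\frac{12n}{40}=\frac{3n}{10}$.

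So $40$ is simply where the computer-assisted range ends, not where error terms become negligible. The constant depends on the exact amortized density. With your dyadic blocks you would need on average at most $6$ sign-$1$ quadruples per window of length $L$, and nothing in your sketch supplies such a count. The statement that the bound ``should come out at most $3/10$'' is therefore unsupported.
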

	
	\begin{lemma}\label{lem:bounds-2-13-26-39}
		For every square-free word $W$ of length $n$, we have
		\[\sum_{\ell=2}^{12}\abs{\cA_{\ell}(W)}\le \frac{20n}{11}+\frac{140}{11}\,, \qquad \sum_{\ell=13}^{25}\abs{\cA_{\ell}(W)}\le \frac{3n}{10}\,,\qquad \textrm{ and} \qquad \sum_{\ell=26}^{39}\abs{\cA_{\ell}(W)} \le \frac{8n}{45}\,.\]
	\end{lemma}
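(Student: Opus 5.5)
Lemma~\ref{lem:bounds-2-13-26-39} asks for three bounds, and I would prove them with one scheme: charge each minimal quadruple to a position of $W$, and show that quadruples with similar $\ell$ cannot crowd together. Fix $\q=(a,\ell,b,c)\in\cA_\ell(W)$ with sign $1$; the sign $-1$ case is symmetric after reversing $W$. Since $W$ is square-free, inserting $c$ at gap $b$ must really break a repetition. Concretely, $W[a+\ell-1,a+2\ell-1)$ equals $W[a,a+\ell-1)$ with $c$ inserted at relative position $b-a+1$. So the factor of length $2\ell-1$ starting at $a$ is ``almost a square'': it is determined by $\ell$, $a$, $b$, $c$ and one block of length $\ell-1$. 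I would charge $\q$ to the right end $a+2\ell-2$ of this factor, or to the gap $b$ itself.

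The key structural step is to show that two minimal quadruples $\q=(a,\ell,b,c)$ and $\q'=(a',\ell',b',c')$ with the same sign, $\ell,\ell'$ in a common range $[L,2L)$, and charged positions close together force a square in $W$. By minimality, two quadruples at the same gap with the same letter are already excluded unless they have different signs. So the relevant interaction is between different gaps, or different letters at nearby gaps. If $|b-b'|$ is small compared to $\ell$, the two almost-squares overlap in a long stretch. In that stretch, $W$ has periods $\ell$ and $\ell'$ apart from one inserted letter, and a Fine--Wilf type argument gives a period $\gcd(\ell,\ell')$ on a segment of length at least about $\ell$. That yields a square in $W$, a contradiction. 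For each range this gives a minimum spacing between charged positions, measured against the number of letters that can be inserted at one gap. At one gap $b$ at most $k$ letters can occur, but a square-completing letter must equal a determined letter of $W$ (the copy of $c$ in the other half). I would use this to bound the number of minimal quadruples per gap per $\ell$-range by a constant, and combine it with the spacing to get the density bounds $\tfrac{3}{10}$ and $\tfrac{8}{45}$.

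For $2\le\ell\le 12$ a spacing argument is too weak. Here I would do a direct local count: for each gap $b$, list which letters $c$ admit a square-completing quadruple with $\ell\le12$, using only the window $W[b-24,b+24)$. Then I would bound the average over gaps by $\tfrac{20}{11}$ through a discharging or sliding-window argument over blocks of $11$ consecutive gaps. Boundary gaps within distance about $24$ of the ends lose the local constraints, and I expect them to produce the additive constant $\tfrac{140}{11}$. In practice this piece probably needs a finite case analysis, perhaps computer-assisted, over square-free windows.

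The main obstacle is the Fine--Wilf step with an inserted letter. The two almost-periodic segments each carry one defect at different positions, so the overlap has to be long enough after deleting a neighbourhood of both defects. Obtaining the sharp constants $\tfrac{3}{10}$ and $\tfrac{8}{45}$ presumably requires a careful case split on the relative positions of $b$, $b'$ and the signs. I would first try to show that any two charged positions in the range $[13,25]$ are at distance at least $\tfrac{10}{3}$ on average, by exhibiting forbidden configurations. If that fails, I would weaken to a per-gap constant times a spacing of $\lceil \ell/2\rceil$.
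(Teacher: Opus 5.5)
\textbf{Verdict.} Your treatment of the ranges $13\le\ell\le25$ and $26\le\ell\le39$ has a genuine gap. It rests entirely on pairwise interactions, and these are too weak to give $\tfrac{3}{10}$ and $\tfrac{8}{45}$.

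\textbf{The pairwise claim fails.} Your structural claim is false as stated. Let $V$ be a square-free word of length $\ell-1$, and let $c\ne c'$ be letters not occurring in $V$. Then $W=VcVc'c$ is square-free, and $(1,\ell,0,c)$ and $(1,\ell+1,\ell-1,c')$ are both minimal of sign $1$: after insertion, the inserted letter next recurs at distance exactly $\ell$, resp.\ $\ell+1$, so no shorter square can be completed. Thus two minimal quadruples with lengths in one range can share their start, and their right ends are only two apart.

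More generally, the lower bounds of \Cref{lem:gap-lower-bound} vanish for $\ell'=\ell\pm1$ and for $\ell'\ge\tfrac32\ell$ (and symmetrically for $\ell'\le\tfrac23\ell$); the authors' computations suggest these bounds are sharp. For example, the lengths $13,14,21,22$ impose no spacing on one another beyond self-spacing. So pairs alone cannot exclude a sign-$1$ density near $\tfrac1{13}+\tfrac1{14}+\tfrac1{21}+\tfrac1{22}\approx0.24$, i.e.\ about $0.48n$ for both signs.

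Your per-gap bound does not rescue this. For fixed $b$, sign and $\ell$ the letter $c$ is determined, but different $\ell$ give different letters. So one gap can carry up to $2\abs{S}$ minimal quadruples with $\ell\in S$, and such a constant times a spacing of order $\ell$ is nowhere near $0.3$.

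The paper states explicitly that its own pairwise machinery (\Cref{lem:bound-same-parity-twothirds} with \Cref{obs:flip-sign-bounds}) gives substantially worse bounds on these ranges. By my count they come out roughly $0.9n$ and $0.29n$, which would push the coefficient of $n$ in \eqref{eq:mscq-collection-bounds} above $5$.

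\textbf{The missing idea.} You need to control how many quadruples can crowd into a window \emph{jointly}, not pair by pair. The paper does this as follows.
\begin{itemize}
\item For each start $a$, it records the set of lengths in $S$ of minimal sign-$(-1)$ quadruples starting at $a$.
\item Realisable $k$-tuples of consecutive such sets are the vertices of the de Bruijn-type digraph $G_k(S)$.
\item Every word traces a closed walk of length $n+k-2\min S+1$ in this digraph. Hence the count is at most this length times the maximum mean weight of a cycle (\Cref{lem:bound-max-mean-weight-cycle}).
\item Howard's algorithm gives $\tfrac{10}{11}$, $\tfrac{3}{20}$, $\tfrac{4}{45}$ for $(k,S)=(10,\{2,\dots,12\})$, $(25,\{13,\dots,25\})$, $(34,\{26,\dots,39\})$ respectively.
\item \Cref{obs:flip-sign-bounds} transfers each bound to sign $1$.
\end{itemize}
In particular, $\tfrac{140}{11}=2\cdot\tfrac{10}{11}\cdot7$ comes from the walk length $n+7$, not from boundary gaps.

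Your computer-assisted plan for $2\le\ell\le12$ points in this direction, but it is unspecified. A fixed-block discharging bound would need every block of $11$ gaps to carry at most $20$ quadruples. That is a stronger local statement than a mean-cycle bound, and you give no reason it holds. The same kind of windowed computation, with windows of length $25$ and $34$, is what the other two ranges require as well.
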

	
	Now we shall apply \Cref{lem:bound-40-infty,lem:bounds-2-13-26-39} to prove \Cref{thm:no-extremal-squarefree-5,thm:nonchalant-converges-5}. We will prove \Cref{lem:bound-40-infty,lem:bounds-2-13-26-39} in \Cref{ssec:proof-bound-40-infty,ssec:proof-bounds-2-13-26-39}, respectively.
	
	\begin{proof}[Proof of \Cref{thm:no-extremal-squarefree-5}]
		Fix~$k\ge5$ and suppose that there is an extremal square-free word~$W$ of length~$n$ over an alphabet~$\A$ of size~$k$. Then for each~$(b,c)\in[n]_0\times\A$, there is a minimal square-completing quadruple~$(a,\ell,b,c)$ in~$W$; this gives~$\abs{\cA(W)}\ge k(n+1)$. On the other hand, grouping up the elements of~$\cA(W)$ according to their second entries gives~$\abs{\cA(W)}=\sum_{\ell\in\N}\abs{\cA_{\ell}(W)}$. Now by combining the bound in \Cref{lem:bound-40-infty} with the bounds in \Cref{lem:bounds-2-13-26-39} and observing that~$\abs{\cA_1(W)}=2n$, we get
		\begin{equation} \label{eq:mscq-collection-bounds}
			k\cdot(n+1)\le\abs{\cA(W)}=\sum_{\ell=1}^{\infty}\abs{\cA_{\ell}(W)} \le 2n + \frac{20n}{11}+\frac{140}{11} + \frac{3n}{10} + \frac{8n}{45} + \frac{3n}{5} < 4.896n + 12.73\,.
		\end{equation}
		
		We shall show that~$n\ge k+2$. First, note that any letter of~$\A$ which does not appear in~$W$ may be inserted at any gap of~$W$ without creating a square, so every letter of~$\A$ must appear in~$W$ and we have~$n\ge k$. Next, the absence of a repeated letter in~$W$ allows us to insert~$W[2]$ at gap~$0$ without creating a square, so~$W$ must contain a letter which appears at least twice and we have~$n\ge k+1$. Now suppose that~$n=k+1$. Then there is a unique letter~$a\in\A$ which appears more than once in~$W$; in fact, it appears exactly twice, say as~$W[i]$ and~$W[j]$ with~$i<j$. If~$i\ge2$, then we may insert~$a$ at gap~$0$ without creating a square; analogously, if~$j\le k$, then we may insert~$a$ at gap~$k+1$ without creating a square. Hence, we have~$i=1$ and~$j=k+1$. But now we may insert~$a$ at gap~$2\le k-1$ without creating a square, so we obtain a contradiction and conclude that~$n\ge k+2$.
		
		If~$k\ge6$, then by~\Cref{eq:mscq-collection-bounds} we have~$6(n+1)\le 4.896n+12.73$, so $n\le 6\le k$, which is a contradiction. Hence, we have~$k=5$. Now \Cref{eq:mscq-collection-bounds} implies~$n\le74$, so we have~$\cA_\ell(W)=\emptyset$ for all~$\ell>37$. By combining the bounds in \Cref{lem:bounds-2-13-26-39}, we get
		\[5\cdot(n+1)\le\sum_{\ell=1}^{37}\abs{\cA_\ell(W)} \le 2n + \frac{20n}{11} + \frac{140}{11} + \frac{3n}{10} + \frac{8n}{45} < 4.296n + 12.73\,,\]
		which implies $n\le 10$, so we have~$\cA_\ell(W)=\emptyset$ for all~$\ell>5$. Now we apply the first bound in \Cref{lem:bounds-2-13-26-39} to obtain
		\[5\cdot(n+1)\le\sum_{\ell=1}^{5}\abs{\cA_\ell(W)} \le 2n + \frac{20n}{11} + \frac{140}{11} < 3.819n + 12.73\,,\]
		which implies $n\le 6 = k+1$. This yields a contradiction and completes the proof.
	\end{proof}
	
	\begin{proof}[Proof of \Cref{thm:nonchalant-converges-5}]
		Fix a total ordering~$\prec$ of an alphabet~$\A$ of size~$k\ge5$ and let the sequence of nonchalant words over~$(\A,\prec)$ be~$(W_n)_{n\ge0}$. Let~$m\in\N$ and take~$n\ge49m+171$. By \Cref{thm:no-extremal-squarefree-5}, there is no extremal square-free word over $\A$. Hence, the sequence of nonchalant words is infinite and $W_n$ exists. Since the word~$W_n$ is square-free, by combining the bound in \Cref{lem:bound-40-infty} with the bounds in \Cref{lem:bounds-2-13-26-39} and observing that~$\abs{\cA_1(W_n)}=2n$, we get
		\[\abs{\cA(W_n)}=\sum_{\ell=1}^{\infty}\abs{\cA_{\ell}(W_n)} \le 2n + \frac{20n}{11}+\frac{140}{11} + \frac{3n}{10} + \frac{8n}{45} + \frac{3n}{5}< 4.896n + 12.73\,.\]
		Hence, $W_{n+1}$ is obtained from~$W_n$ by inserting a letter into one of the last~$\frac{4.896n + 17.73}{5}$ gaps of~$W_n$. In particular, this implies that~$W_{n+1}$ and~$W_n$ have the same prefix of length~$m\le n-\frac{4.896n + 17.73}{5}$. It follows that the prefix of~$(W_n)_{n\ge0}$ stabilises and so the sequence converges to an infinite word.
	\end{proof}
	
	\subsection{Proof of Lemma~\ref{lem:bound-40-infty}} \label{ssec:proof-bound-40-infty}
	
	In this subsection we prove \Cref{lem:bound-40-infty}. Our proof approach, which is motivated by the ideas of~\cite{HongZhang}, involves grouping up and analysing the interactions between pairs of minimal square-completing quadruples of the same sign. This motivates the following definition.
	
	\begin{definition}
		For positive integers $\ell,\ell'\in\N$ let $\f(\ell,\ell')\in \N_0\cup \{\infty\}$ be the least~$d$ such that there is an alphabet $\A$ and a square-free word $W$ over $\A$ with distinct minimal square-completing quadruples of sign 1 of the forms $(a,\ell,b,c)$ and $(a+d,\ell',b',c')$.
	\end{definition}
	
	The following lemma gives a simple upper bound on~$\f(\ell,\ell')$.
	
	\begin{lemma} \label{lem:gap-upper-bound}
		For all~$\ell,\ell'\in\N$ we have~$\f(\ell,\ell')\le\ell$.
	\end{lemma}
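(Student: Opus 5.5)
The statement is an existence claim: since $\f(\ell,\ell')$ is the \emph{least} $d$ for which a suitable word exists, it suffices, for every pair $\ell,\ell'$, to build one square-free word $W$ with two distinct minimal square-completing quadruples of sign~$1$, of the forms $(a,\ell,b,c)$ and $(a+d,\ell',b',c')$ with $0\le d\le\ell$. The alphabet is ours to choose, so I will make it as large as needed and use mostly pairwise distinct letters. Then square-freeness of $W$ and minimality of the quadruples reduce to bookkeeping about where letters repeat: a square $ZZ$ of length $2m$ forces every letter in it to recur at distance exactly $m$.

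\textbf{Case $\ell'\ge\ell$.} I take $d=\ell-1$, with distinct letters $x_1,\dots,x_\ell,z_{\ell+1},\dots,z_{\ell'}$. Put $Y=x_1\dots x_\ell z_{\ell+1}\dots z_{\ell'}$ and let $Y^-$ be $Y$ with its last letter deleted. Set
\[W=x_1\dots x_{\ell-1}\,Y^-\,Y\,.\]
Inserting $x_\ell$ at gap $\ell-1$ produces $x_1\dots x_\ell x_1\dots x_\ell$ starting at $a=1$. This gives the quadruple $(1,\ell,\ell-1,x_\ell)$, whose sign is $1$ because $b=\ell-1\le a+\ell-2$. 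Inserting $z_{\ell'}$ at the end of the $Y^-$ block produces $YY$ starting at position $\ell$. This gives a quadruple $(\ell,\ell',b',z_{\ell'})$ with sign $1$, and here $d=\ell-1\le\ell$. When $\ell'=\ell$ there are no $z$'s, and the two quadruples are still distinct.

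For this case I then check two things:
\begin{enumerate}
\item $W$ is square-free. Each letter occurs at most three times, and the distances between occurrences are $\ell-1$, $\ell'-1$ and $\ell'$. Comparing these distances with the positions shows that no factor can be a square.
\item Both quadruples are minimal. For the fixed pair $(b,c)$, every square in $W+_bc$ must contain the inserted letter. Every repetition distance through that letter is at least the intended $\ell$ (respectively $\ell'$). Among squares of that length with sign $1$, the intended one has the largest first entry, so condition \ref{item:mscq-same-ell-sign-positive} holds.
\end{enumerate}

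\textbf{Case $\ell'<\ell$.} I aim for $d=0$. I choose distinct letters $y_1,\dots,y_{\ell'}$ and fresh letters $u_1,\dots,u_{\ell-\ell'}$, and set $X=Y^-Y$ followed by a suitable filler, so that $X$ has length $\ell$ and $Y=y_1\dots y_{\ell'}$. I then form $W=X^-X$, where $X^-$ is $X$ with one letter removed from its final filler part. The second quadruple inserts the missing $y_{\ell'}$ inside $X^-$ and completes $YY$ at $a$. The first quadruple reinserts the filler letter and completes $XX$ at the same $a$. If $\ell=\ell'+1$ the filler is empty, so I would instead shift the second square by one position, which still keeps $d\le\ell$.

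\textbf{Main obstacle.} I expect the hard part to be verifying minimality in the second case. The internal $Y^-Y$ pattern means that, after the filler letter is inserted, $W+_bc$ may contain shorter squares than $XX$. These would have length below $\ell$ and would destroy minimality of the length-$\ell$ quadruple via \ref{item:mscq-min-ell}. Square-freeness of $W$ itself needs the same care, since $Y^-Y$ occurs twice at distance $\ell-1$.

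My first attempt at a fix is to separate the two occurrences with extra fresh letters, so that every repetition distance through the inserted filler letter is exactly $\ell$. If that fails, I would instead place the $\ell'$-square so that its first half lies within the first $\ell$ positions after $a$ and uses fresh letters not occurring in $X$. This keeps $d\le\ell$ while making the two gadgets interact only through positions, not through shared letters.
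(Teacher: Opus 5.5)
\textbf{The construction fails because the words you build are not square-free.} For $\ell'\ge\ell$ the block $Y^-$ begins with $x_1\dots x_{\ell-1}$. So $W=x_1\dots x_{\ell-1}Y^-Y$ begins with the square $(x_1\dots x_{\ell-1})^2$ whenever $\ell\ge2$; for instance $\ell=2$, $\ell'=3$ gives $W=x_1x_1x_2x_1x_2z_3$. Independently, $Y^-Y$ is $Y^-Y^-$ followed by one letter, so it contains $(Y^-)^2$ whenever $\ell'\ge2$. Your check (i) misses that $\ell-1$ consecutive letters all recurring at distance $\ell-1$ \emph{is} a square.

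This is a structural problem, not a bookkeeping slip. Both of your quadruples reinsert the \emph{last} letter of the first half, i.e.\ $b=a+\ell-2$ and $b'=a'+\ell'-2$. \Cref{lem:scq-insertion-no-right-edge} shows that a sign-$1$ quadruple of this kind with half-length at least $2$ never exists in a square-free word, because the remaining letters of the two halves already form a square in $W$. Your case $\ell'<\ell$ is built from the same $Y^-Y$ gadget and inherits the defect. Besides, an $X$ of length $\ell$ starting with $Y^-Y$ needs $\ell\ge2\ell'-1$, and you leave minimality open. Some of your target offsets are also unattainable by any word. \Cref{lem:gap-lower-bound} gives $\varphi(\ell,\ell)\ge\ell$, which excludes $d=\ell-1$ when $\ell'=\ell\ge2$. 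It also gives $\varphi(\ell,\ell')\ge\ell'$ for $\frac23\ell<\ell'<\ell-1$, which excludes $d=0$ there.

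\textbf{The missing idea is to insert at the \emph{left} end of the first half} ($b=a-1$), to use letters that occur only once as separators, and to take $d=\ell$. With this choice three things follow:
\begin{enumerate}
\item After cutting $W$ at such letters, square-freeness reduces to factors with no repeated letter.
\item The inserted letter has only one admissible partner, which fixes the half-length.
\item Since the inserted letter is the first letter of the first half, no other sign-$1$ square of that length for the same $(b,c)$ starts later, so minimality follows from \ref{item:mscq-same-ell-sign-positive}.
\end{enumerate}
This is exactly the paper's proof. For $\ell\le\ell'$ it takes $W=P1PQ(\ell'+1)PQ$ with $P=2\cdots\ell$ and $Q=(\ell+1)\cdots\ell'$, together with the quadruples $(1,\ell,0,1)$ and $(\ell+1,\ell',\ell,\ell'+1)$. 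For $\ell'<\ell$ it uses words such as $R(\ell'+1)S1R(\ell'+1)R$ with $R=2\cdots\ell'$ and $S=2\cdots(\ell-\ell')$, plus a variant when $\ell>2\ell'$, with the same two quadruples. There the unique letter $1$ separates the inserted $\ell'+1$ from its unwanted earlier copy.
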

	
	\begin{proof}
		We distinguish three cases. 
		
		\textbf{Case 1:} $\ell\le\ell'$. Then define~$\A\coleq[\ell'+1]$, $P \coleq 2\cdots\ell$, $Q \coleq (\ell+1)\cdots\ell'$,~and consider the word~$W\coleq P1PQ(\ell'+1)PQ$. First, we show that it is square-free. Indeed, as the letters 1 and $\ell'+1$ only appear once in $W$, any square of $W$ must be a square in $P$ or~$PQ$. But in both of them, no letter appears more than once, so $W$ is square-free. Furthermore, $W$ has distinct minimal square-completing quadruples~$(1,\ell,0,1)$ and~$(\ell+1,\ell',\ell,\ell'+1)$ of sign~$1$, so~$\f(\ell,\ell')\le\ell$. 
		
		\textbf{Case 2:} $\ell'<\ell\le2\ell'$. Then set~$\A\coleq [\ell'+1]$, $R \coleq 2\cdots\ell'$, $S \coleq 2\cdots(\ell-\ell')$,~and consider the word~$W\coleq R(\ell'+1)S1R(\ell'+1)R$. Again, we claim that $W$ is square-free. Indeed, as the letter $1$ appears only once in $W$, any square of $W$ must be a square in $R(\ell'+1)S$ or $R(\ell'+1)R$. But in both of them, $\ell'+1$ only appears once because $\ell-\ell'< \ell'+1$, so each square of $W$ must be a square in $R$ or $S$. As both $R$ and $S$ have no square, $W$ is square-free as well. Furthermore, $W$ has distinct minimal square-completing quadruples~$(1,\ell,0,1)$ and~$(\ell+1,\ell',\ell,\ell'+1)$ of sign~$1$, so~$\f(\ell,\ell')\le\ell$. 
		
		\textbf{Case 3:} $2\ell'<\ell$. Set~$\A\coleq[\ell-\ell'+1]$, $T \coleq 2\cdots\ell'$, $U \coleq (\ell'+2)\cdots(\ell-\ell'+1)$, and consider the word~$W\coleq T(\ell'+1)TU1T(\ell'+1)TU$. Again, we claim that $W$ is square-free. Indeed, first the letter 1 is unique and then the letter $(\ell'+1)$ is unique in both of the resulting factors. Thus, any square of $W$ must be a square in $T$ or $TU$ which is impossible. Thus, $W$ is square-free. Furthermore, it has distinct minimal square-completing quadruples~$(1,\ell,0,1)$ and~$(\ell+1,\ell',\ell,\ell'+1)$ of sign~$1$, so~$\f(\ell,\ell')\le\ell$.
	\end{proof}
	
	The following lemma gives lower bounds on~$\f(\ell,\ell')$ and forms a key part of the proof of \Cref{lem:bound-40-infty}. Its proof, which entails a highly technical and careful analysis, is deferred to \Cref{ssec:proof-gap-lower-bound}.
	
	\begin{lemma} \label{lem:gap-lower-bound}
		For all~$\ell,\ell'\in\N\sm\{1\}$ we have
		\begin{equation*}
			\f(\ell,\ell') \ge
			\begin{cases}
				\ell &\textrm{if } \ell'=\ell \\
				4\ell-3\ell'+2 &\textrm{if } \ell+1<\ell'\le\frac{4}{3}\ell \\
				1 &\textrm{if } \frac{4}{3}\ell<\ell'<\frac{3}{2}\ell \\
				\ell' &\textrm{if } \frac{2}{3}\ell<\ell'<\ell-1 \\
				0 &\textrm{otherwise.}
			\end{cases}
		\end{equation*}
	\end{lemma}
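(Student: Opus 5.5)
We fix a square-free word $W$ and two distinct minimal square-completing quadruples $\q=(a,\ell,b,c)$ and $\q'=(a+d,\ell',b',c')$ of sign $1$ with $0\le d$. The goal is to show that $d$ below the claimed threshold forces either a square in $W$ or a violation of minimality.

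The basic tool is the following translation of the square-completing condition for a sign $1$ quadruple. If $(a,\ell,b,c)$ has sign $1$, the inserted letter lies in the first half. So $W[a,b+1)$ (of length $b-a+1$) coincides with $W[a+\ell-1,b+\ell)$. In addition, $c=W[b+\ell]$, and $W[b+1,a+\ell-1)=W[b+\ell+1,a+2\ell-1)$. Thus $W$ contains the "almost square" $W[a,a+2\ell-1)$ of period $\ell$, except for a single defect position corresponding to gap $b$.

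Minimality adds two constraints:
\begin{itemize}
\item no shorter period $\ell''<\ell$ can be completed by inserting $c$ at $b$;
\item among period-$\ell$ completions at $(b,c)$ with sign $1$, $a$ is maximal.
\end{itemize}
In particular, the almost square cannot be shifted one step right, which gives a mismatch $W[a+1\dots]$ at the appropriate place.

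We treat the cases of the statement separately.

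\textbf{Case $\ell'=\ell$.} Suppose $d<\ell$. The two almost squares with the same period $\ell$ overlap in a window of length about $2\ell-1-d\ge \ell$. Periodicity with period $\ell$ propagates across the union except near the two defects. We then argue that:
\begin{itemize}
\item either $b=b'$, and then $c=c'$, so one quadruple dominates the other and contradicts minimality of one of them (they are distinct);
\item or the two defects, each lying at a position where the other period holds, force $W$ itself to contain a genuine square of period $\ell$ or of period dividing it.
\end{itemize}
We expect a short case split on the relative order of $b,b'$.

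\textbf{Cases with $\ell'\ne\ell$.} Here we use a Fine--Wilf-type argument. On the overlap of the two almost squares, $W$ has both periods $\ell$ and $\ell'$ except at at most two defect positions. When the overlap is long enough, roughly $\ell+\ell'-\gcd$ plus a correction of a few letters per defect, $W$ has period $\ell'-\ell$ (or $\gcd(\ell,\ell')$) on a long stretch, hence contains a square.

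The thresholds come from measuring the overlap as a function of $d$:
\begin{itemize}
\item For $\frac23\ell<\ell'<\ell-1$, we need $d<\ell'$ to leave enough overlap.
\item For $\ell+1<\ell'\le\frac43\ell$, the computation gives $d\le 4\ell-3\ell'+1$. The longer almost square of period $\ell'$ starts later, so the overlap is $2\ell-1-d$. Requiring this to exceed about $3(\ell'-\ell)+\ell$ plus a constant yields the bound.
\item For $\frac43\ell<\ell'<\frac32\ell$, we only need to exclude $d=0$. Same starting point means both halves start at $a$, and the defect positions then give a square of length $\ell'-\ell$ or a shorter completion contradicting minimality.
\end{itemize}
The "otherwise" case is trivial.

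\textbf{Main obstacle.} The hardest step is the defect bookkeeping in the Fine--Wilf argument. A defect can destroy the periodicity precisely on the stretch we need, so we must place $b$ and $b'$ relative to the overlap and sometimes use minimality to shift to a shorter square. Concretely, we would first try the following. If a defect falls inside the periodic stretch, we exhibit an alternative quadruple $(a'',\ell'',b,c)$ with $\ell''<\ell$ (via the derived period $\ell'-\ell$), which contradicts \ref{item:mscq-min-ell}. Otherwise the stretch is defect-free and yields a square in $W$. Establishing the exact constants $4\ell-3\ell'+2$ will require careful index chasing, iterating the period shift three times.
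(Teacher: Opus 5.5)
\textbf{This is a plan rather than a proof, and it has a genuine gap.} None of the thresholds ($\ell$, $4\ell-3\ell'+2$, $1$, $\ell'$) is derived. The ``defect bookkeeping'' you postpone is the entire content of the lemma.

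The model you intend to use also points the wrong way. In $W$, a sign-$1$ quadruple is not ``period $\ell$ with one bad letter''. Your own formulas show it has shift $\ell-1$ on $[a,b]$ and shift $\ell$ on $[b+1,a+\ell-2]$, which is a phase slip. Fine--Wilf is also the wrong scale. At $d=4\ell-3\ell'+1$ the overlap is $3\ell'-2\ell-2$, which is strictly smaller than $\ell+\ell'-\gcd(\ell,\ell')$ for every $\ell'\le\frac43\ell$. So an argument that needs that much overlap cannot reach the claimed bound. Your figure $3(\ell'-\ell)+\ell$ is read off from the answer, not derived, and it does not follow from the Fine--Wilf estimate you invoke.

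What the paper actually exploits is local. Take a window of length only about $\ell'-\ell$ that lies in both first halves and on a fixed side of $b$ and of $b'$. The two quadruples copy it to two adjacent places. This gives either a square of length $\ell'-\ell$ or $\ell'-\ell\pm1$ (items (i)--(vii) of Lemma~\ref{lem:diff-length-mscq-outcomes}). Or, when the copying is done inside $W+_{b'}c'$, it gives a completion of length $\ell'-\ell$ or $\ell'-\ell+1$ at $(b',c')$, contradicting minimality (items (viii), (ix)). For $\ell'=\ell$ the contradiction is a collision of the two shifts at one position, $W[i+\ell-1]=W[i]=W[i+\ell]$ with $b<i\le b'$. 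This comes after excluding the edge cases $b=a+\ell-2$ and $b'=a'-1$ (Lemmas~\ref{lem:scq-insertion-no-right-edge} and~\ref{lem:same-length-mscq-outcomes}). The thresholds then come from a case analysis of where $b$ and $b'$ can sit so as to cut every such window.

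\textbf{These first-order arguments are not enough.} Your claim for $d=0$, that the defects give ``a square of length $\ell'-\ell$ or a shorter completion'', is false as stated. Take $\ell=10$, $\ell'=14$ and the sign-$1$ quadruples $(1,10,6,c)$ and $(1,14,3,c')$, so $c=W[16]$ and $c'=W[17]$.
\begin{itemize}
\item The letter identifications forced by these two quadruples produce no square of length $3$, $4$ or $5$.
\item They produce no shorter completion at $(6,c)$ or at $(3,c')$.
\item The configuration survives items (i)--(ix).
\item The only forced square is $W[9]W[10]$ (and its copy $W[23]W[24]$). It is reached by the five-step chain $W[9]=W[19]=W[5]=W[14]=W[1]=W[10]$, which alternates the shifts $\ell,\ \ell',\ \ell-1,\ \ell'-1,\ \ell-1$.
\end{itemize}
Residual configurations like this need a second Euclid-type step, with period roughly $\ell-2(\ell'-\ell)=3\ell-2\ell'$. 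This is what the multi-step chains in items (x)--(xii) of Lemma~\ref{lem:diff-length-mscq-outcomes} supply, and it is where the ratios $\frac43$ and $\frac32$ enter. To turn your outline into a proof you need two things: a catalogue of such local contradictions, including the multi-step chains, and an explicit case analysis on the positions of $b$ and $b'$ in each range.
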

	
	We note that even though we only prove lower bounds on $\f(\ell,\ell')$, computational evidence suggests that the lower bounds in \Cref{lem:gap-lower-bound} are actually the precise values of $\f(\ell,\ell')$.
	
	Now we shall develop the machinery needed to use \Cref{lem:gap-lower-bound} to prove \Cref{lem:bound-40-infty}. We start with the following simple observation.
	
	\begin{observation} \label{obs:no-late-inserts}
		If~$(a,\ell,b,c)$ is a square-completing quadruple in a square-free word~$W$ of length~$n$, then~$a\le n+2-2\ell$.
	\end{observation}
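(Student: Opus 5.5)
We prove \Cref{obs:no-late-inserts} by checking that the square $(W+_bc)[a,a+2\ell)$ produced by the quadruple must fit inside the extended word, and then converting this into the stated bound on $a$. The factor $(W+_bc)[a,a+2\ell)$ is only defined if $a+2\ell\le (n+1)+1$, since $W+_bc$ has length $n+1$. This alone gives $a\le n+2-2\ell$. So the observation is essentially a bookkeeping statement about the range of indices in the definition of a square-completing quadruple.

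To make the argument robust, we will also check that the inserted letter really lies inside the square. Square-freeness of $W$ is what guarantees this. Suppose the inserted letter at gap $b$ were not among the positions $a,\dots,a+2\ell-1$ of $W+_bc$. Then the square $(W+_bc)[a,a+2\ell)$ would correspond to a factor of $W$ (shifted by one if $b<a-1$). That factor would be a square in $W$, which is a contradiction. Hence $b+1\in[a,a+2\ell-1]$, so the square occupies $2\ell$ consecutive positions of $W+_bc$, one of which is the new letter. Its remaining $2\ell-1$ letters come from the consecutive letters $W[a,a+2\ell-1)$ of $W$. Requiring these to exist in $W$ gives $a+2\ell-2\le n$, that is, $a\le n+2-2\ell$.

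There is no real obstacle here. The only care needed is the off-by-one indexing between positions of $W$ and of $W+_bc$. We will fix this by noting that positions $<b+1$ agree in both words and positions $>b+1$ of $W+_bc$ correspond to positions one less in $W$.
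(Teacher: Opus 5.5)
Your first paragraph is exactly the paper's proof: the factor $(W+_bc)[a+\ell,a+2\ell)$ must lie inside the word $W+_bc$ of length $n+1$, so $a+2\ell-1\le n+1$, i.e.\ $a\le n+2-2\ell$. Your second paragraph, which uses square-freeness to show that the inserted letter lies inside the square and then bounds $a$ via $W[a,a+2\ell-1)$, is correct but redundant; the paper's argument does not use square-freeness at all.
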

	
	\begin{proof}
		Since~$(a,\ell,b,c)$ is a square-completing quadruple, we know that~$(W+_{b}c)[a,a+\ell)=(W+_{b}c)[a+\ell,a+2\ell)$. In particular, $(W+_{b}c)[a+\ell,a+2\ell)$ is a proper factor of the word~$W+_{b}c$ of length~$n+1$. Thus, $a+2\ell-1\le n+1$, so~$a\le n+2-2\ell$.
	\end{proof}
	
	For a finite set~$S\subs\N\sm\{1\}$, let~$G(S)$ be the directed graph with vertex set~$S$ and edge set~$S\times S$, i.e.\@ all directed edges, including loops. The following lemma gives an upper bound on~$\sum_{\ell\in S}\abs{\cA^{+}_\ell(W)}$ in terms of the minimum mean weight of a directed cycle in the weighted digraph~$(G(S),\w)$. Here, the \emph{mean weight} of a directed cycle with edge sequence $(e_1,\dots,e_k)$, where $w_i$ is the weight of edge $e_i$ for $i\in[k]$, is $\frac{1}{k}\sum_{i\in [k]}w_i$.
	
	\begin{lemma} \label{lem:bound-min-mean-weight-cycle}
		Let~$S\subs\N\sm\{1\}$ be a finite set. Let~$\bw$ denote the minimum mean weight of a directed cycle in the weighted digraph $(G(S),\w)$. Suppose that~$\bw\ne0$. Then for every square-free word~$W$ of length~$n$ we have
		\[ \sum_{\ell\in S}\abs{\cA^{+}_\ell(W)}\le \max\!\left(0,\frac{n+\max S-2\min S+1}{\bw}\right)\,. \]
	\end{lemma}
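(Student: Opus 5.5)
We sort all minimal square-completing quadruples of sign $1$ with second entry in $S$ by their first entry and read off the resulting sequence of second entries as a walk in $G(S)$. Let $\q_1,\dots,\q_m$ be the elements of $\bigcup_{\ell\in S}\cA^{+}_\ell(W)$, where $\q_i=(a_i,\ell_i,b_i,c_i)$ and $a_1\le a_2\le\dots\le a_m$. We may assume $m\ge1$. For each $i<m$ the quadruples $\q_i,\q_{i+1}$ are distinct minimal square-completing quadruples of sign $1$ in the square-free word $W$, and they have the form $(a_i,\ell_i,\cdot,\cdot)$ and $(a_i+d,\ell_{i+1},\cdot,\cdot)$ with $d=a_{i+1}-a_i\ge0$. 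By the definition of $\f$ we therefore get
\[a_{i+1}-a_i\ge\f(\ell_i,\ell_{i+1}).\]
Summing over $i$ gives
\[a_m-a_1\ge\sum_{i=1}^{m-1}\f(\ell_i,\ell_{i+1}),\]
which is the weight of the walk $\ell_1\to\ell_2\to\dots\to\ell_m$ in $(G(S),\w)$.

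We bound the two sides separately. For the left side, $a_1\ge1$, and \Cref{obs:no-late-inserts} gives $a_m\le n+2-2\ell_m\le n+2-2\min S$. Hence
\[a_m-a_1\le n+1-2\min S.\]
For the right side, we close the walk into a closed walk by adding the edge $\ell_m\to\ell_1$. Its weight $\f(\ell_m,\ell_1)$ is at most $\ell_m\le\max S$ by \Cref{lem:gap-upper-bound}, and is at least $0$. A closed walk of length $m$ (with $m$ edges) decomposes into directed cycles, and each cycle has mean weight at least $\bw$. So the total weight of the closed walk is at least $m\bw$. (Loops are allowed in $G(S)$, so the case $m=1$ is also covered.) Combining,
\[m\bw\le\sum_{i=1}^{m-1}\f(\ell_i,\ell_{i+1})+\f(\ell_m,\ell_1)\le n+1-2\min S+\max S.\]
If $\bw>0$, this gives $m\le(n+\max S-2\min S+1)/\bw$. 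If $\bw<0$, we only need $m\le0$ as the maximum, but $m\ge1$ is arbitrary. However, $\f\ge0$ everywhere, so $\bw\ge0$, and together with $\bw\ne0$ we have $\bw>0$. The case $m=0$ is covered by the $\max(0,\cdot)$ in the statement.

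The main obstacle is the cycle decomposition step, together with making sure the definition of $\f$ really applies to consecutive quadruples. For the decomposition, we repeatedly extract a simple directed cycle from the closed walk whenever a vertex repeats. This partitions the $m$ edges into cycles $C_1,\dots,C_r$ with $\sum_j|C_j|=m$, and each satisfies weight$(C_j)\ge|C_j|\bw$. For applying $\f$, the only points to check are that the two quadruples are distinct, that both have sign $1$, and that $d\ge0$; the ordering by first entry guarantees $d\ge0$. Ties $a_i=a_{i+1}$ are harmless, since then $\f$ must equal $0$.
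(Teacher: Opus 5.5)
Your proof is correct and follows essentially the same route as the paper. You order the quadruples by first entry and view the second entries as a closed walk in $G(S)$. You bound its weight from above via the definition of $\f$, Observation~\ref{obs:no-late-inserts} and Lemma~\ref{lem:gap-upper-bound}, and from below by $m\bw$ via a cycle decomposition.

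Two small differences improve on the paper. You handle $m=1$ uniformly, since the telescoping bound already covers it, so the paper's separate case is unnecessary. You also note explicitly that $\f\ge0$ forces $\bw>0$, which the paper leaves implicit when dividing by $\bw$.
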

	
	\begin{proof}
		Enumerate the elements of~$\bigcup_{\ell\in S}\cA^{+}_\ell(W)$ as~$(a_1,\ell_1,b_1,c_1),\dots,(a_m,\ell_m,b_m,c_m)$ such that~$a_1\le \dots\le a_m$. The desired statement holds trivially when~$m=0$, so we may assume that~$m\in\N$. Let $T$ be the closed directed walk in $G(S)$ with vertex sequence $(\ell_1,\dots,\ell_m,\ell_1)$. For a closed directed walk~$C$ in~$G(S)$ with vertex sequence $(u_1,\dots,u_c,u_1)$, set~$\w(C) \coleq \w(u_c,u_1) + \sum_{i=1}^{c-1} \w(u_i,u_{i+1})$. We prove the following claim.
		
		\begin{claim} \label{claim:start-span-word}
			$m\cdot\bw \le \w(T) \le n+\max S-2\min S+1$.
		\end{claim}
		
		\begin{claimproof}
			Fix a decomposition of the multiset~$E(T)=\bigcup_{i=1}^sE(C_i)$ where $C_1,\dots,C_s$ are directed cycles. Since~$\bw$ is the minimum mean weight of a directed cycle, for all~$i\in[s]$ we have~$\w(C_i)\ge e(C_i)\cdot\bw$. Hence, we obtain~$\w(T) = \sum_{i=1}^s\w(C_i) \ge \sum_{i=1}^se(C_i)\cdot\bw = m\cdot\bw$. For the upper bound, we start with the case~$m=1$. Here we have~$T=(\ell_1)$ and~$n\ge2\ell_1-1$, which gives
			\[ \w(T)=\f(\ell_1,\ell_1)\overset{\ref{lem:gap-upper-bound}}\le\ell_1\le n-\ell_1+1\le n+\max S-2\min S+1\,. \]
			It remains to consider the case~$m\ge2$. Since~$W$ is square-free, by the definition of~$\f$ for all~$i\in[m-1]$ we have~$\w(\ell_{i},\ell_{i+1}) = \f(\ell_{i},\ell_{i+1}) \le a_{i+1}-a_i$. Hence, we have
			\begin{align*}
				\w(T) &= \w(\ell_m,\ell_1) + \sum_{i=1}^{m-1}\w(\ell_i,\ell_{i+1}) \le \w(\ell_m,\ell_1) + a_m-a_1 \le\w(\ell_m,\ell_1) + a_m-1\\
				&\overset{\ref{lem:gap-upper-bound}}\le  \max S+a_m-1\overset{\ref{obs:no-late-inserts}}\le \max S+n+2-2\min S-1= n+\max S-2\min S+1\,,
			\end{align*}
			which concludes the proof of the claim.
		\end{claimproof}
		
		To complete the proof, note that \Cref{claim:start-span-word} gives~$m\le\frac{n+\max S-2\min S+1}{\bw}$.
	\end{proof}
	
	The following lemma gives an upper bound on~$\sum_{\ell\in S}\abs{\cA^{+}_\ell(W)}$ for short intervals~$S$ of positive integers of the same parity. This serves as a single-sign analogue of \Cref{lem:bound-40-infty} and its proof involves the application of \Cref{lem:gap-lower-bound,lem:bound-min-mean-weight-cycle}.
	
	\begin{lemma}\label{lem:bound-same-parity-twothirds}
		Let~$W$ be a square-free word of length~$n$. Suppose that~$a,b\in\N\setminus\{1\}$ have the same parity and satisfy~$a\le b<\frac{3}{2}a$. Then
		\[\sum_{i=0}^{\frac{b-a}{2}}\abs{\cA^{+}_{a+2i}(W)}\le \max\!\left(0,\frac{2(n+b+1-2a)}{a+1}\right)\,.\]
	\end{lemma}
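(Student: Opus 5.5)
We plan to apply \Cref{lem:bound-min-mean-weight-cycle} with $S\coleq\{a,a+2,\dots,b\}$. Then $\max S=b$ and $\min S=a$, so the numerator there is exactly $n+b-2a+1$. It therefore suffices to show that the minimum mean weight $\bw$ of a directed cycle in $(G(S),\w)$ satisfies $\bw\ge\frac{a+1}{2}>0$. In particular $\bw\ne0$, and the lemma gives the claimed bound $\frac{2(n+b+1-2a)}{a+1}$, with the $\max(0,\cdot)$ carried along.

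First we evaluate the lower bounds from \Cref{lem:gap-lower-bound} on the edges of $G(S)$. All elements of $S$ have the same parity and satisfy $a\le\ell\le b<\frac32 a$, so for distinct $\ell,\ell'\in S$ we have $\abs{\ell-\ell'}\ge2$.

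A loop $(\ell,\ell)$ has weight $\f(\ell,\ell)\ge\ell\ge a$.

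For an edge $(\ell,\ell')$ with $\ell'<\ell$, we have $\ell'<\ell-1$ and $\ell'\ge a>\frac23 b\ge\frac23\ell$. So the fourth case applies and $\f(\ell,\ell')\ge\ell'\ge a$.

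For an edge $(\ell,\ell')$ with $\ell'>\ell$, we have $\ell'\ge\ell+2>\ell+1$ and $\ell'<\frac32 a\le\frac32\ell$. Either the second case applies, giving $\f(\ell,\ell')\ge4\ell-3\ell'+2$, or the third case applies, giving $\f(\ell,\ell')\ge1$. In both cases $\f(\ell,\ell')\ge0$. The weakness of the upward edges is the main obstacle.

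The key step is to note that every directed cycle of length at least $2$ contains at least as many descending edges as ascending ones. Indeed, a cycle must return to its starting value, so it contains at least one descending edge. More precisely, we pair ascending and descending edges. It would suffice to show that each cycle has average weight at least $\frac{a+1}{2}$. For a loop this holds since $a\ge\frac{a+1}2$.

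For a general cycle, the crude estimate (descending edges weigh at least $a$, ascending edges weigh at least $0$) could give an average as low as about $\frac{a}{k}$ if many ascents occur. So we refine it using the telescoping total ascent. Let the ascents be $\ell_i\to\ell_i'$ with increments $\delta_i=\ell'_i-\ell_i\ge2$. Then $\sum_i\delta_i$ equals the total descent, which is at most (number of descents)$\cdot(b-a)$.

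For an ascent in the second case, the weight is at least $4\ell-3\ell'+2=\ell-3\delta+2$. In the third case, $\delta>\frac{\ell}{3}$, and the weight is at least $1$. In either case the ascent weight is at least $\max(1,\ell-3\delta+2)$.

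A descent $\ell\to\ell'$ has weight at least $\ell'$. We would then sum the weights around the cycle and use $\sum(\text{descent sizes})=\sum\delta_i$ together with $b<\frac32a$ to compare the total weight against $\frac{a+1}{2}$ times the cycle length. The cleanest route may be to show the bound for 2-cycles first: $\ell\to\ell'\to\ell$ has weight at least $\ell+\max(1,\ell-3\delta+2)$. Then we would argue that any cycle decomposes, after reordering, into up/down pairs whose weights each average at least $\frac{a+1}2$. If this pairing fails for some long cycles, we would instead bound $\bw$ directly via a potential function $p(\ell)$, showing $\f(\ell,\ell')+p(\ell)-p(\ell')\ge\frac{a+1}2$ for every edge. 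I expect that verifying this inequality in the narrow regime $\frac43\ell<\ell'<\frac32\ell$ will be the delicate calculation.
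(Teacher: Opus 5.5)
Your reduction is the same as the paper's: you apply \Cref{lem:bound-min-mean-weight-cycle} with $S=\{a,a+2,\dots,b\}$, so everything comes down to $\bw\ge\frac{a+1}{2}$. Your edge bounds from \Cref{lem:gap-lower-bound} are also correct (loops $\ge\ell$, descents $\ge\ell'\ge a$, an ascent of size $\delta$ at least $\max(1,\ell-3\delta+2)$). But that mean-weight inequality is the whole content of the lemma, and the proposal does not prove it.

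Your ``key step'' is false. As soon as $b\ge a+4$, the cycle $a\to a+2\to a+4\to a$ lies in $G(S)$ and has two ascents and only one descent, so ascents cannot in general be paired with descents. Nor are the non-case-3 ascents safely heavy: $4\ell-3\ell'+2$ is only about $2$ when $\ell'\approx\frac43\ell$. A budget on the total ascent $\sum_i\delta_i$ cannot close the argument either. It does not see that a cheap ascent must start low (below $\frac34 b$) and end high, so that two cheap ascents need a descent between them. You need position-dependent bookkeeping.

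That bookkeeping is the potential you mention only as a fallback, and it must be written down and checked edge by edge. The paper takes
\begin{itemize}
\item $\pi(u)=\frac{a-1}{2}$ for $u<\frac34b$,
\item $\pi(u)=\frac{a+1}{2}-(4u-3b+2)$ for $\frac34b\le u<\frac{a-3+6b}{8}$,
\item $\pi(u)=0$ for larger $u$.
\end{itemize}
This $\pi$ is non-increasing with values in $[0,\frac{a-1}{2}]$, and the paper verifies $\f(\ell,\ell')+\pi(\ell)-\pi(\ell')\ge\frac{a+1}{2}$ for every edge.

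With this choice, descents are immediate: their weight $\ge a$ absorbs a potential loss of at most $\frac{a-1}{2}$. So is the regime $\ell'>\frac43\ell$ that you expected to be delicate: there $\pi(\ell)=\frac{a-1}{2}$ and $\pi(\ell')=0$, so weight $1$ already suffices.

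The real work is in the ascents with $\ell\ge\frac34\ell'$ where $\ell$ or $\ell'$ lies in the window $[\frac34b,\frac{a-3+6b}{8})$. The slope $4$ of the middle piece is chosen to cancel against $4\ell-3\ell'+2$, and the case analysis uses $b<\frac32a$ repeatedly. Until you exhibit such a potential (or an equivalent argument) and check all edges, the proof has a gap at its central step.
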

	
	\begin{proof}
		Let $S\coleq\{a,a+2,a+4,\dots,b\}$. By \Cref{lem:bound-min-mean-weight-cycle}, it is enough to show that the minimum mean weight of $(G(S),\w)$ is at least $\frac{a+1}{2}$. To do this, we define the function~$\pi\colon V(G(S))\to\R$ as follows.
		\begin{equation*}
			\pi(u)\coleq
			\begin{cases}
				\frac{a+1}{2}-1 &\textrm{if }u<\frac{3}{4}b \\
				\frac{a+1}{2}-(4u-3b+2) &\textrm{if }\frac{3}{4}b\le u< \frac{a-3+6b}{8} \\
				0 &\textrm{if }\frac{a-3+6b}{8}\le u\le b\,.
			\end{cases}
		\end{equation*}
		We prove the following key claim whose proof entails detailed but elementary calculations with extensive case distinction.
		
		\begin{claim} \label{claim:mean-weight-bound-vtx-pot}
			For all~$(\ell,\ell')\in E(G(S))$ we have~$\w^{\pi}(\ell,\ell')\coleq\w(\ell,\ell')+\pi(\ell)-\pi(\ell')\ge\frac{a+1}{2}$.
		\end{claim}
		Before proving the claim, we will quickly see how it implies that the minimum mean weight of $(G(S),\w)$ is at least $\frac{a+1}{2}$. Let~$C=v_1\dots v_{\ell}v_1$ be a directed cycle in~$G(S)$; set~$v_{\ell+1}\coleq v_1$ for convenience. Then, 
		\[ \frac{1}{\ell}\sum_{i=1}^{\ell}\w(v_i,v_{i+1}) = \frac{1}{\ell}\sum_{i=1}^{\ell}\left(\w(v_i,v_{i+1})+\pi(v_i)-\pi(v_{i+1})\right) \overset{\ref{claim:mean-weight-bound-vtx-pot}}\ge \frac{a+1}{2}\,, \]
		so the minimum mean weight of $(G(S),\w)$ is at least $\frac{a+1}{2}$. Thus, we only need to prove the claim.
		\begin{claimproof}
			Note that since~$u<\frac{a-3+6b}{8}$ is equivalent to $0<\frac{a+1}{2}-(4u-3b+2)$ and~$\frac{3}{4}b\le u$ is equivalent to $\frac{a+1}{2}-(4u-3b+2)\le \frac{a+1}{2}-2$, for all~$u\in S$ we have~$0\le\pi(u)\le \frac{a+1}{2}-1$ and for all~$u,u'\in S$ with~$u\le u'$ we have~$\pi(u)\ge \pi(u')$.
			
			Let~$\ell,\ell'\in S$. If~$\ell=\ell'$, then by \Cref{lem:gap-lower-bound} we have~$\w^{\pi}(\ell,\ell')=\w(\ell,\ell')\ge \ell \ge\frac{a+1}{2}$. If~$\ell>\ell'$, then since~$\ell$ and~$\ell'$ have the same parity by the definition of~$S$, we have~$\ell'<\ell-1$. Furthermore, we have~$\frac{2}{3}\ell\le\frac{2}{3}b<a\le \ell'$. Hence, by \Cref{lem:gap-lower-bound} we have
			\[ \w^\pi(\ell,\ell') = \w(\ell,\ell')+\pi(\ell)-\pi(\ell') \ge \ell'+0-\left(\frac{a+1}{2}-1\right) \ge \frac{a+1}{2}\,.\]
			From now onwards, we have~$\ell<\ell'$. Since~$\ell$ and~$\ell'$ have the same parity, we have~$\ell<\ell'-1$. Furthermore, we have~$\frac{2}{3}\ell'\le\frac{2}{3}b<a\le\ell$. We distinguish two main cases.
			
			\textbf{Case 1:}~$\ell<\frac{3}{4}\ell'$. Since~$\ell<\frac{3}{4}\ell'\le\frac{3}{4}b$, we have~$\pi(\ell)=\frac{a+1}{2}-1$. Since~$b<\frac{3}{2}a$ and~$a\le\ell<\frac{3}{4}\ell'$, we have~$\frac{a-3+6b}{8} <\frac{a+6\cdot\frac{3}{2}a}{8}<\frac{4}{3}a<\ell'$. Hence, we have~$\pi(\ell')=0$. Now by \Cref{lem:gap-lower-bound} we have~$\w(\ell,\ell')\ge1$, so we obtain
			\[ \w^\pi(\ell,\ell') = \w(\ell,\ell') + \pi(\ell) - \pi(\ell') \ge 1 + \left(\frac{a+1}{2} - 1\right) - 0 =\frac{a+1}{2}\,. \]
			
			\textbf{Case 2:}~$\ell\ge\frac{3}{4}\ell'$. In this case, by \Cref{lem:gap-lower-bound} we have~$\w(\ell,\ell')\ge 4\ell-3\ell'+2\ge 2$. We distinguish three subcases.
			
			\textbf{Subcase 2A:}~$\ell<\frac{3}{4}b$. In this subcase, we have~$\pi(\ell)=\frac{a+1}{2}-1$. If~$\frac{a-3+6b}{8}\le\ell'$, then we have~$\pi(\ell')=0$ and so
			\[ \w^\pi(\ell,\ell')=\w(\ell,\ell')+\pi(\ell)-\pi(\ell')\ge 2+\left(\frac{a+1}{2}-1\right)-0\ge\frac{a+1}{2}\,. \]
			If~$\frac{3}{4}b\le\ell'<\frac{a-3+6b}{8}$, then we have~$\pi(\ell')=\frac{a+1}{2}-(4\ell'-3b+2)$ and so
			\begin{align*}
				\w^\pi(\ell,\ell') &= \w(\ell,\ell')+\pi(\ell)-\pi(\ell') \\
				&\ge (4\ell-3\ell'+2)+\left(\frac{a+1}{2}-1\right)-\left(\frac{a+1}{2}-(4\ell'-3b+2)\right)\\
				&=4\ell+\ell'-3b+3\ge5a-\frac{9}{2}a+3\ge\frac{a+1}{2}\,,
			\end{align*}
			where we used~$a\le\ell\le\ell'$ and~$b<\frac{3}{2}a$. Otherwise, we have~$\ell'<\frac{3}{4}b$, in which case we have~$\pi(\ell')=\frac{a+1}{2}-1$ and so
			\[ \w^\pi(\ell,\ell') = \w(\ell,\ell')+\pi(\ell)-\pi(\ell') \ge 4\ell-3\ell'+2 \ge 4a-\frac{27}{8}a +3 \ge\frac{a+1}{2}\,,\]
			where we use~$a\le\ell$, $\ell'<\frac{3}{4}b$ and~$b<\frac{3}{2}a$ to obtain the penultimate inequality.
			
			\textbf{Subcase 2B:}~$\frac{3}{4}b\le\ell<\frac{a-3+6b}{8}$. In this subcase, we have~$\pi(\ell)=\frac{a+1}{2}-(4\ell-3b+2)$. If~$\frac{a-3+6b}{8}\le\ell'$, then we have~$\pi(\ell')=0$ and so
			\begin{align*}
				\w^\pi(\ell,\ell') &= \w(\ell,\ell')+\pi(\ell)-\pi(\ell') \\
				&\ge (4\ell-3\ell'+2)+\left(\frac{a+1}{2}-(4\ell-3b+2)\right)-0 \ge\frac{a+1}{2}\,,
			\end{align*}
			where we simplify expressions and use~$\ell'\le b$ in the final inequality. Otherwise, we have~$\frac{3}{4}b\le\ell'<\frac{a-3+6b}{8}$, in which case we have~$\pi(\ell')=\frac{a+1}{2}-(4\ell'-3b+2)$ and so
			\begin{align*} 
				\w^\pi(\ell,\ell')&= \w(\ell,\ell')+\pi(\ell)-\pi(\ell') \ge 4\ell-3\ell'+2-4\ell+4\ell'=\ell'+2 \ge \frac{a+1}{2}\,, 
			\end{align*}
			where we use~$a\le\ell'$ in the final inequality.
			
			\textbf{Subcase 2C:}~$\frac{a-3+6b}{8}\le\ell$. In this case, we have~$\pi(\ell)=\pi(\ell')=0$. Hence, we have
			\[ \w^\pi(\ell,\ell')= \w(\ell,\ell')+\pi(\ell)-\pi(\ell') \ge 4\ell-3\ell'+2 \ge \frac{a-3+6b}{2}-3\ell'+2 \ge \frac{a+1}{2}\,, \]
			where we used~$\frac{a-3+6b}{8}\le\ell\le\ell'\le b$.
		\end{claimproof}
	\end{proof}
	
	The proof of \Cref{lem:bound-40-infty} applies the following observation, which shows that bounds on~$\sum_{\ell\in S}\abs{\cA^{+}_{\ell}(W)}$ also apply to~$\sum_{\ell\in S}\abs{\cA^{-}_{\ell}(W)}$ and vice versa. 
	
	\begin{observation} \label{obs:flip-sign-bounds}
		Given~$m,n\in\N$ and~$S\subseteq\N$, the following statements are equivalent.
		\begin{enumerate}
			\item \label{item:positive-sign-bound} For every square-free word~$W$ of length~$n$, we have~$\sum_{\ell\in S}\abs{\cA^{+}_{\ell}(W)}\le m$.
			\item \label{item:negative-sign-bound} For every square-free word~$W$ of length~$n$, we have~$\sum_{\ell\in S}\abs{\cA^{-}_{\ell}(W)}\le m$.
		\end{enumerate}
	\end{observation}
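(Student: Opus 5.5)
The plan is to use word reversal. For a word $W$ of length $n$, let $W^R$ denote its reverse, so that $W^R[i]=W[n+1-i]$. Reversal is an involution on words of length $n$ that preserves square-freeness, since a factor $XX$ of $W$ becomes the factor $X^RX^R$ of $W^R$. It therefore suffices to build a bijection between $\cA^{+}_\ell(W)$ and $\cA^{-}_\ell(W^R)$ for every $\ell$. Then $\sum_{\ell\in S}\abs{\cA^{+}_\ell(W)}=\sum_{\ell\in S}\abs{\cA^{-}_\ell(W^R)}$, and as $W$ ranges over all square-free words of length $n$ so does $W^R$. This gives \ref{item:positive-sign-bound}$\Leftrightarrow$\ref{item:negative-sign-bound}, and the symmetric argument (reversing again) handles the other direction.

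\textbf{Step 1: the map on quadruples.} Inserting $c$ at gap $b$ of $W$ and then reversing gives the same word as inserting $c$ at gap $n-b$ of $W^R$; that is, $(W+_bc)^R=W^R+_{n-b}c$, a word of length $n+1$. The factor $(W+_bc)[a,a+2\ell)$ occupies positions $a,\dots,a+2\ell-1$, and in the reversed word of length $n+1$ these become positions $n+2-a-2\ell,\dots,n+1-a$. A square stays a square under reversal. Hence $(a,\ell,b,c)$ is square-completing in $W$ if and only if
\[(a^*,\ell,n-b,c)\quad\text{with}\quad a^*\coloneqq n+2-a-2\ell\]
is square-completing in $W^R$. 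This map is clearly an involution.

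\textbf{Step 2: signs flip.} The inserted letter sits at position $b+1$ of $W+_bc$. It lies in the first half $[a,a+\ell)$ exactly when it is not in the second half. After reversal, that position lands in the second half of the reversed square. So the sign changes from $1$ to $-1$ and vice versa. One can double-check this directly from the definition: the condition $b\le a+\ell-2$ should translate into $n-b\ge a^*+\ell-1$, i.e.\ $n-b> a^*+\ell-2$.

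\textbf{Step 3: minimality is preserved.} Fix $(b,c)$. The map from Step 1 is a bijection between square-completing quadruples in $W$ with last entries $(b,c)$ and those in $W^R$ with last entries $(n-b,c)$. It preserves $\ell$ and swaps signs. Since $a\mapsto n+2-a-2\ell$ reverses order for fixed $\ell$, condition \ref{item:mscq-same-ell-sign-positive} (same $\ell$, sign $1$, $a'\le a$) transforms into condition \ref{item:mscq-same-ell-sign-negative} (same $\ell$, sign $-1$, $a'^*\ge a^*$). Conditions \ref{item:mscq-min-ell} and \ref{item:mscq-same-ell-split-signs} are invariant. Hence minimal quadruples correspond to minimal quadruples, and the bijection restricts to one from $\cA^{+}_\ell(W)$ onto $\cA^{-}_\ell(W^R)$.

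The only delicate point is the index bookkeeping in Steps 1 and 2, namely the exact off-by-one in the formulas for $a^*$ and the sign condition. I would verify these carefully on a small example before writing them up; everything else is formal.
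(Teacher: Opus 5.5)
Your approach is the same as the paper's: reverse the word, map each quadruple to its mirror image, and check that signs swap and minimality is preserved. However, Step~1 contains an off-by-one error, and as written its key equivalence is false.

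\textbf{The error.} Reversing the word $W+_bc$, which has length $n+1$, sends position $i$ to $n+2-i$. So the positions $a,\dots,a+2\ell-1$ of the square go to $n+3-a-2\ell,\dots,n+2-a$. The correct formula is therefore $a^*=n+3-a-2\ell$, which is exactly the paper's $a_0$.

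Your $a^*=n+2-a-2\ell$ already fails for $W=ab$ with $n=2$. The quadruple $(2,1,2,b)$ is square-completing, since it gives $abb$, but your image would have first entry $0$. The correct image is $(1,1,0,b)$ in $W^R=ba$, giving $bba$. More generally, the value $a=n+2-2\ell$ is allowed by Observation~\ref{obs:no-late-inserts} and would be sent outside $[n]$.

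\textbf{The sign check.} The same slip is why your proposed double-check in Step~2 does not come out. With your formula, $n-b\ge a^*+\ell-1$ reads $b\le a+\ell-1$ rather than $b\le a+\ell-2$. With $a^*=n+3-a-2\ell$ one gets $a^*+\ell-2=n+1-a-\ell$. Hence $(a^*,\ell,n-b,c)$ has sign $1$ in $W^R$ if and only if $b\ge a+\ell-1$, that is, if and only if $(a,\ell,b,c)$ has sign $-1$ in $W$, exactly as needed.

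\textbf{Step 3.} The corrected map $a\mapsto n+3-a-2\ell$ is still an order-reversing involution for fixed $\ell$. So your Step~3 goes through unchanged: (M1) and (M2) are invariant, and (M3) and (M4) are swapped.

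Once fixed, your Step~3 is slightly cleaner than the paper's argument. You show directly that the mirror of a minimal quadruple is minimal, which gives a bijection $\cA^{+}_\ell(W)\to\cA^{-}_\ell(W^R)$. The paper only builds an injection. For each element of $\cA^{-}_\ell(W)$ it picks some minimal sign-$1$ quadruple in the reversed word with the same $(n-b,c)$, and uses minimality of the original to exclude a smaller second entry. Either suffices for the observation.
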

	
	\begin{proof}
		Suppose that~\ref{item:positive-sign-bound} holds. Fix a square-free word~$W$ of length~$n$ and let~$W'$ be the square-free word of length~$n$ defined for all~$i\in[n]$ as~$W'[i]\coleq W[n-i+1]$. Then for each~$\ell\in S$ we shall show that~$\abs{\cA^{-}_{\ell}(W)}\le\abs{\cA^{+}_{\ell}(W')}$ by explicitly defining an injective map~$f_\ell\colon\cA^{-}_{\ell}(W)\to\cA^{+}_{\ell}(W')$. Take~$(a,\ell,b,c)\in\cA^{-}_{\ell}(W)$ and set~$a_0\coleq n+3-a-2\ell$. Since~$(a,\ell,b,c)\in\cA^{-}_{\ell}(W)$, the quadruple~$(a_0,\ell,n-b,c)$ is square-completing in~$W'$ with sign~$1$. Hence, there exists $(a',\ell',n-b,c)\in\cA^{+}_{\ell'}(W')$ such that~$\ell'<\ell$ or both~$\ell'=\ell$ and~$a_0\le a'$. Suppose that~$\ell'<\ell$. Then the quadruple~$(n+3-a'-2\ell',\ell',b,c)$ is square-completing in~$W$ with sign~$-1$, contradicting the minimality of~$(a,\ell,b,c)$. Hence, we have~$\ell'=\ell$ and~$a_0\le a'$. Set~$f_{\ell}(a,\ell,b,c)=(a',\ell,n-b,c)$; this is well-defined and yields the desired injective map. Hence, we have~$\abs{\cA^{-}_{\ell}(W)}\le\abs{\cA^{+}_{\ell}(W')}$. Now take a sum over~$\ell\in S$ to obtain~$\sum_{\ell\in S}\abs{\cA^{-}_{\ell}(W)} \le \sum_{\ell\in S}\abs{\cA^{+}_{\ell}(W')} \le m$, where the second inequality follows from~\ref{item:positive-sign-bound}. Hence, we conclude that~\ref{item:negative-sign-bound} holds.
		
		Finally, note that the proof that~\ref{item:negative-sign-bound} implies~\ref{item:positive-sign-bound} is entirely analogous.
	\end{proof}
	
	We now give a proof of \Cref{lem:bound-40-infty} using \Cref{lem:bound-same-parity-twothirds} and \Cref{obs:flip-sign-bounds}.
	
	\begin{proof}[Proof of \Cref{lem:bound-40-infty}]
		Take~$a\in\N$ and the largest positive integer~$b<\frac{3a}{2}$ which has the same parity as~$a$. By \Cref{lem:bound-same-parity-twothirds} for both~$(a,b)$ and~$(a+1,b+1)$, we obtain
		\[ \sum_{\ell\in \left[a,\frac{3}{2}a\right)}\abs{\cA^{+}_\ell(W)} \le \sum_{i=0}^{\frac{b-a}{2}}\abs{\cA^{+}_{a+2i}(W)} + \sum_{i=0}^{\frac{b-a}{2}}\abs{\cA^{+}_{a+2i+1}(W)} \le \frac{2n}{a+1} + \frac{2n}{a+2} \le \frac{4n}{a}\,. \]
		Now we take~$a=\ceil{40\cdot\left(\frac{3}{2}\right)^j}$ for all~$j\in\N_0$ and obtain
		\begin{equation*}
			\sum_{\ell=40}^\infty\abs{\cA^{+}_\ell(W)} = \sum_{j=0}^\infty\sum_{\ell\in\left[40\cdot \left(\frac{3}{2}\right)^j,40\cdot \left(\frac{3}{2}\right)^{j+1}\right)}\abs{\cA^{+}_\ell(W)}\le \sum_{j=0}^\infty\frac{4n}{40\cdot\left(\frac32\right)^j} =\frac{3n}{10}\,.
		\end{equation*}
		Finally, since~$\abs{\cA_\ell(W)}=\abs{\cA^{+}_\ell(W)}+\abs{\cA^{-}_\ell(W)}$ for all~$\ell\in\N$, the desired outcome follows by \Cref{obs:flip-sign-bounds}.
	\end{proof}
	
	\subsection{Proof of Lemma~\ref{lem:bounds-2-13-26-39}} \label{ssec:proof-bounds-2-13-26-39}
	
	In this subsection we prove \Cref{lem:bounds-2-13-26-39}. We remark that the combination of \Cref{lem:bound-same-parity-twothirds} and \Cref{obs:flip-sign-bounds} used to prove \Cref{lem:bound-40-infty} may be applied to prove upper bounds on sums similar to those in \Cref{lem:bounds-2-13-26-39}, but the bounds are substantially worse. Since the main contribution to the key sum in \Cref{eq:mscq-collection-bounds} comes from the terms with~$\ell\le39$ rather than~$\ell\ge40$, it makes sense to seek better bounds for them by analysing interactions between minimal square-completing quadruples which are more complex than the pairwise interactions studied in the previous subsection. We start with the following definition.
	
	\begin{definition}\label{def:realise}
		Let~$k\in\N$ and for~$i\in[k]$, let~$S_i\subs\N\sm\{1\}$. Let~$W$ be a square-free word of length~$n$ and let~$(a_1,\ell_1,b_1,c_1),\dots,(a_m,\ell_m,b_m,c_m)$ be an enumeration of the elements of~$\cA^{-}(W)\sm\cA^{-}_1(W)$. For~$a\in\Z$, set~$L_a = \{\ell_i:a_i=a\}$ if~$a\in[n]$ and~$L_a\coleq\emptyset$ otherwise. We say that~$W$ \emph{realises} the~$k$-tuple $(S_1,\dots,S_k)$ if there exists some~$a\in[-k+1,n-1]$ such that for all~$i\in[k]$, we have~$S_i\subs L_{i+a}$; we say that the~$k$-tuple~$(S_1,\dots,S_k)$ is \emph{realisable} if there is a square-free word~$W$ that realises~$(S_1,\dots,S_k)$.
	\end{definition}
	
	For a finite set~$S\subs\N\sm\{1\}$ and~$k\in\N$, define the directed graph~$G_k(S)$ together with an edge weight function~$\g\colon E\big(G_k(S)\big)\to\N_0$ as follows. The vertex set of~$G_k(S)$ is the set~$R_{S,k}$ of all realisable~$k$-tuples~$(S_1,\dots,S_k)$ such that for all~$i\in[k]$ we have~$S_i\subs S$. An ordered pair~$(T,T')\in R_{S,k}\times R_{S,k}$ with~$T=(S_1,\dots,S_k)$ and~$T'=(S_1',\dots,S_k')$ is an edge of~$G_k(S)$ if and only if~$S_{i+1}=S_i'$ for all~$i\in[k-1]$; if~$(T,T')\in E\big(G_k(S)\big)$, we set the weight~$\g\big((T,T')\big)$ to be~$\abs{S_k'}$.
	
	\begin{lemma} \label{lem:bound-max-mean-weight-cycle}
		Given a finite set~$S\subs\N\sm\{1\}$ and an integer~$k\in\N$, write~$\bwsk$ for the maximum mean weight of a directed cycle in the weighted digraph $(G_k(S),\g)$. Then for every square-free word~$W$ of length~$n$ we have
		\[\sum_{\ell\in S}\abs{\cA^{-}_{\ell}(W)}\le \max\big(0,(n+k-2\min S+1)\cdot\bwsk\big).\]
	\end{lemma}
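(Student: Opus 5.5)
Fix a square-free word $W$ of length $n$. We encode the negative-sign minimal quadruples with second entry in $S$ as a closed walk in $G_k(S)$ and then bound the walk's weight by a cycle decomposition, as in \Cref{lem:bound-min-mean-weight-cycle} but with a maximum in place of the minimum.

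For each $a\in\Z$ let $L_a$ be as in \Cref{def:realise}, and set $L'_a\coleq L_a\cap S$. Then
\[\sum_{\ell\in S}\abs{\cA^{-}_\ell(W)}=\sum_{a\in[n]}\abs{L'_a}.\]
By \Cref{obs:no-late-inserts}, $L'_a=\emptyset$ unless $1\le a\le n+2-2\min S$. If every $L'_a$ is empty the bound holds trivially, so assume not. Put $N\coleq n+2-2\min S$ and consider the indices $a=-k+1,\dots,N$. For each such $a$ define the $k$-tuple
\[T_a\coleq(L'_{a+1},\dots,L'_{a+k}).\]
Each $T_a$ is realised by $W$ itself: the offset $a$ lies in $[-k+1,n-1]$ because $N\le n-1$, which uses $\min S\ge2$. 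Moreover $T_a\subs S$ componentwise, so $T_a\in R_{S,k}$. By construction $(T_a,T_{a+1})$ is an edge of $G_k(S)$ with weight $\abs{L'_{a+k+1}}$.

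To close the walk, observe that $T_{-k+1}=(L'_{-k+2},\dots,L'_1)$ starts with $k-1$ empty sets. Likewise $T_{N-k+1},\dots,T_N$ end in empty sets from position $N$ onward. A convenient choice is to start at $T_{-k}=(\emptyset,\dots,\emptyset)$, which is realisable with $a=-k+1$, say, and to run the walk $T_{-k},T_{-k+1},\dots,T_N$. The final tuple $T_N=(L'_{N+1},\dots)$ is all empty, so $T_N=T_{-k}$ and the walk is closed. The all-empty tuple has a loop edge to itself of weight $0$, and the edge entering each $T_a$ contributes $\abs{L'_{a+k}}$. So the total weight of this closed walk is exactly $\sum_{a}\abs{L'_a}$, and its length is $N+k=n+k-2\min S+2$. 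One should double-check the off-by-one: we may shorten by one by identifying $T_{-k}$ with $T_N$, giving $n+k-2\min S+1$ edges.

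Finally, decompose the edge multiset of the closed walk into directed cycles $C_1,\dots,C_s$. Each satisfies $\g(C_i)\le e(C_i)\cdot\bwsk$, so the total weight is at most $(n+k-2\min S+1)\,\bwsk$, which is the claimed bound.

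The main obstacle is the bookkeeping in the middle step: confirming that every window tuple, including those padded with empty sets at the ends, is genuinely realisable with an offset in the allowed range $[-k+1,n-1]$, and getting the count of edges exactly equal to $n+k-2\min S+1$ rather than one more. I would first verify the endpoint cases carefully using \Cref{obs:no-late-inserts} and the hypothesis $\min S\ge 2$.
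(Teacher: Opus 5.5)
Your construction is the same as the paper's: windows of $k$ consecutive sets $L'_a=L_a\cap S$ form a closed walk in $G_k(S)$, and you then decompose it into cycles. However, your edge count does not give the stated bound, and your proposed repair does not work. The closed walk $T_{-k},T_{-k+1},\dots,T_N$ with $T_N=T_{-k}$ has exactly $N+k=n+k-2\min S+2$ edges. ``Identifying $T_{-k}$ with $T_N$'' is exactly what makes the walk closed, so it removes no edge. As written, you only get $\sum_{\ell\in S}\abs{\cA^{-}_\ell(W)}\le (n+k-2\min S+2)\,\bwsk$, which is weaker by $\bwsk$. For instance, it would turn the constant $\frac{70}{11}$ in \Cref{lem:bounds-2-13-26-39} into $\frac{80}{11}$.

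The fix is to pad with only $k-1$ empty sets, i.e.\ to drop the all-empty tuple and use the walk $T_{-k+1},T_{-k+2},\dots,T_{N-1},T_{-k+1}$. The closing pair $(T_{N-1},T_{-k+1})$ is an edge of $G_k(S)$. Indeed, the last $k-1$ entries of $T_{N-1}=(L'_N,L'_{N+1},\dots,L'_{N+k-1})$ and the first $k-1$ entries of $T_{-k+1}=(L'_{-k+2},\dots,L'_0,L'_1)$ are all empty; for $k=1$ the edge condition is vacuous. This edge replaces the two edges $(T_{N-1},T_N)$ and $(T_{-k},T_{-k+1})$, of weights $0$ and $\abs{L'_1}$, by a single edge of weight $\abs{L'_1}$. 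Hence the total weight is still $\sum_a\abs{L'_a}$, and the walk now has $N+k-1=n+k-2\min S+1$ edges. Up to a cyclic shift, this is precisely the paper's walk $X_1,\dots,X_{n^*},X_1$ with $n^*=n'+k-1$. The rest of your argument is fine: the windows are realisable with offsets in $[-k+1,n-1]$ using $\min S\ge2$, and the cycle-decomposition bound goes through.
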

	
	\begin{proof}
		Fix a finite set~$S\subs\N\sm\{1\}$, an integer~$k\in\N$, a square-free word~$W$ of length~$n$ and enumerate the elements of~$\cA^{-}(W)\sm\cA^{-}_1(W)$ as~$(a_1,\ell_1,b_1,c_1),\dots,(a_m,\ell_m,b_m,c_m)$. For~$a\in\Z$ set~$L_a = \{\ell_i:a_i=a\}$ if~$a\in[n]$ and~$L_a\coleq\emptyset$ otherwise; set~$S_a\coleq L_a\cap S$. By \Cref{obs:no-late-inserts}, we have~$S_a=\emptyset$ for all~$a>n'\coleq n+2-2\min S$. It follows that if $n'\le0$ then $\sum_{\ell\in S}\abs{\cA^-_\ell(W)}=0$, in which case the desired statement holds trivially. Hence, we may assume that $n'\in\N$. Set~$n^*:=n'+k-1$.
		
		For~$i\in[n^*]$ set~$S'_i,S'_{n^*+i}\coleq S_i$ and~$X_i\coleq(S'_i,S'_{i+1},\dots,S'_{i+k-1})$. Note that the vertex sequence~$(X_1,\dots,X_{n^*},X_1)$ defines a closed directed walk~$T$ in~$G_k(S)$. Indeed, since~$W$ realises the~$k$-tuples~$X_1,\dots,X_{n^*}$, they are all vertices of~$G_k(S)$. Furthermore, it follows from the definitions of the~$k$-tuples~$X_i$ and~$E(G_k(S))$ that~$(X_{n^*},X_1)\in E(G_k(S))$ and $(X_i,X_{i+1})\in E(G_k(S))$ for all $i\in[n^*-1]$. For a closed directed walk~$C$ in~$G_k(S)$ with vertex sequence $(U_1,\dots,U_c,U_1)$ set~$\g(C) \coleq \g(U_c,U_1) + \sum_{i=1}^{c-1} \g(U_i,U_{i+1})$. Note that~$\g(T)=\sum_{a\in[n]}\abs{S_a}=\sum_{\ell\in S}\abs{\cA^{-}_{\ell}(W)}$.
		
		Fix a decomposition of the multiset~$E(T)=\bigcup_{i=1}^sE(C_i)$ where $C_1,\dots,C_s$ are directed cycles. Since~$\bwsk$ is the maximum mean weight of a directed cycle, for all~$i\in[s]$ we have~$\g(C_i)\le e(C_i)\cdot\bwsk$. Note that~$\sum_{i=1}^se(C_i)=e(T)=n^*$. Hence, we obtain
		\begin{align*}
			\sum_{\ell\in S}\abs{\cA^{-}_{\ell}(W)} &= \g(T) = \sum_{i=1}^s\g(C_i) \le \left(\sum_{i=1}^se(C_i)\right)\cdot\bwsk = n^*\cdot\bwsk \\
			&= (n+k-2\min S+1)\cdot\bwsk\,,
		\end{align*}
		completing the proof.
	\end{proof}
	
	\begin{lemma} \label{lem:max-mean-weight-cycles}
		For
		\begin{align*}
			(k_1,S_1)&\coleq(10,\{2,\dots,12\}),\\
			(k_2,S_2)&\coleq(25,\{13,\dots,25\}),\\
			(k_3,S_3)&\coleq(34,\{26,\dots,39\}),
		\end{align*}
		we have
		\[
		\bwp{S_1}{k_1}=\frac{10}{11},\qquad
		\bwp{S_2}{k_2}=\frac{3}{20},\qquad
		\bwp{S_3}{k_3}=\frac{4}{45}.
		\]
	\end{lemma}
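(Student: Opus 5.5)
This lemma is a finite computational statement, so the plan is to prove it by an exhaustive computer search in two phases. First build the weighted digraphs $(G_{k_j}(S_j),\g)$ for $j=1,2,3$. Then compute their maximum mean cycle weight exactly.

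\textbf{Phase 1: enumerating the realisable tuples.} By \Cref{def:realise}, a $k$-tuple $(T_1,\dots,T_k)$ with $T_i\subs S$ is realisable exactly when some square-free word $W$ contains $k$ consecutive start positions at which the sign~$-1$ minimal square-completing quadruples with second entry in $S$ have second entries at least $T_1,\dots,T_k$. The indices may also run off either end of $W$, where the sets are empty.

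Whether a quadruple $(a,\ell,b,c)$ of sign $-1$ is square-completing and minimal depends only on a bounded window of $W$ around $a$. Concretely, the insertion gap $b$ lies in $[a+\ell-1,a+2\ell-1)$. Minimality compares only against quadruples with the same $(b,c)$ and second entry at most $\ell$, so it is determined by the factor $W[a-\ell,a+2\ell]$, up to boundary effects at the ends of $W$. Hence the set $L_a\cap S$ is a function of a window of length about $3\max S$ around $a$. A $k$-tuple is then determined by a factor of length roughly $k+3\max S$, together with flags recording whether that factor touches the start or end of $W$.

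Since only the letters in the window matter and they can be relabelled canonically by order of first occurrence, the alphabet can be taken of bounded size. I would therefore enumerate square-free words with canonical labelling by depth-first search, extending letter by letter. As I go, I maintain the sets $L_a\cap S$ and record every $k$-window that occurs. Rather than storing arbitrary subsets, it suffices to record the maximal realised tuples. Since realisability is closed under taking subsets coordinatewise, and the edge weight $\abs{S_k'}$ is monotone, the maximum mean cycle is attained on maximal tuples together with their shifts.

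\textbf{Phase 2: the mean-weight computation.} With the vertex set and the de Bruijn-type edges ($S_{i+1}=S_i'$) in hand, I would run Karp's algorithm to get the exact maximum mean cycle weight. I then confirm the values $\frac{10}{11}$, $\frac{3}{20}$ and $\frac{4}{45}$, and exhibit an optimal cycle for each: a word achieving $10$ quadruples per $11$ positions, and so on.

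For a rigorous certificate of the upper bound, I would not rely on Karp's output alone. Instead I would output a potential $\pi$ on the vertices with $\g(T,T')+\pi(T')-\pi(T)\le\bwsk$ for every edge, exactly as in the potential argument of \Cref{lem:bound-same-parity-twothirds}, and check it in rational arithmetic.

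\textbf{Main obstacle.} The hard part is Phase 1: the window length $k+3\max S$ is about $150$ for $S_3$. Enumerating all square-free windows of that length over even a $5$-letter alphabet is infeasible. I would first try a state-compression approach. The automaton state is the last roughly $2\max S$ letters, and the search stores only the sets $L_a$ that are still undecided, merging branches whose future-relevant suffixes coincide. If that is still too large, I would bound the alphabet size needed in a window and, using \Cref{lem:gap-lower-bound}, discard configurations that are provably impossible. For example, two same-sign minimal quadruples with similar $\ell$ cannot start close together. This keeps the reachable tuple sets sparse and the graph small, as the small optimal densities suggest.
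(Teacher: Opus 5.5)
Your top-level plan matches the paper's proof, which is a program that builds $(G_{k_i}(S_i),\psi_{S_i,k_i})$ and runs Howard's maximum-mean-cycle algorithm. Your Phase~2 plays the same role. The rational potential certificate is a real gain in checkability, though Karp's $\Theta(|V|\,|E|)$ running time is unlikely to be practical on graphs of the size the paper reports.

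The gap is Phase~1: you have no procedure for computing the vertex set $R_{S,k}$ that is both feasible and sound.
\begin{itemize}
\item Exhaustive enumeration of square-free windows is sound but hopeless, and the window is longer than you state. A competing shorter square at the same gap $b\le a+2\ell-2$ can reach position $b+2\ell-3$, so minimality depends on roughly $W[a-\ell,a+4\ell]$, not $W[a-\ell,a+2\ell]$.
\item Compressing to the last $\approx 2\max S$ letters does not help. Canonically relabelled square-free words of that length over an unbounded alphabet are still astronomically many.
\end{itemize}
Your other shortcuts are unsafe. \Cref{lem:bound-max-mean-weight-cycle} needs an \emph{upper} bound on $\bwsk$, so $R_{S,k}$ may be over-approximated but never under-approximated.
\begin{itemize}
\item Bounding the alphabet can lose tuples that are realisable only by words with many distinct letters. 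Identifying letters can only create squares and shorter competing squares, never destroy them.
\item Keeping only maximal tuples can lose cycles. A periodic sequence may need a coordinate strictly smaller than a maximal window allows, so that consecutive windows agree on their overlap. For example, suppose ($k=2$) the maximal realisable pairs were $(\{2\},\{3,4\})$ and $(\{3\},\{2\})$. Then the maximal pairs alone carry no cycle, whereas $(\{2\},\{3\})\to(\{3\},\{2\})\to(\{2\},\{3\})$ is a cycle of $G_2(S)$.
\end{itemize}

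The missing idea is an exact realisability test that does not enumerate words.
\begin{itemize}
\item For a candidate tuple, fix for each coordinate $i$ and each $\ell\in S_i$ a gap $b$ in the sign~$-1$ range (at most $\ell$ choices). The square-completing conditions then become equalities between positions.
\item Let $W_0$ be the word whose letters are the union--find classes of these equalities. Any word meeting the same constraints is $h(W_0)$ for a letter map $h$.
\item The map $h$ sends squares to squares. It also sends a square-completing quadruple $(a',\ell',b,c_0)$ of $W_0$ to the square-completing quadruple $(a',\ell',b,h(c_0))$ of $h(W_0)$, with the same sign.
\item Hence if some square-free word realises the tuple with these gaps, then $W_0$ is square-free and the required quadruples are minimal in $W_0$. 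This is a finite check.
\item Every sub-window of a realisable tuple is realisable. So you can generate candidates for $R_{S,k}$ from pairs of elements of $R_{S,k-1}$ overlapping in $k-2$ coordinates, and test each candidate exactly.
\end{itemize}

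Finally, a maximum-mean cycle of $G_k(S)$ is a periodic sequence whose $k$-windows may be realised by different words. So the witness for $\bwsk\ge\frac{10}{11}$ (and the other two values) is that cycle itself. It is not a single word with $10$ quadruples per $11$ positions, which need not exist.
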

	
	\begin{proof}
		The statement was verified using a C++ program. For each $i\in[3]$, it first builds the weighted digraph $(G_{k_i}(S_i),\psi_{S_i,k_i})$ and then computes the maximum mean weight cycle using Howard's Algorithm. The source code can be found under \url{https://github.com/JEval42/NoExtremalSquareFreeWordsOverAlphabetsOfSizeAtLeast5/}. The calculations were performed on a CPU node of the Curta high-performance computing system at Freie Universität Berlin. Each instance was executed serially using one allocated CPU core. The program was compiled in C++17 mode with GCC 13.3.0 using the flags 
		\begin{verbatim}-O3 -march=native -flto -DNDEBUG.\end{verbatim} 
		The three computations together required 16 hours of wall-clock time and reached a peak resident memory usage of 23.15 GB.
	\end{proof}
	
	Finally, we are ready to prove \Cref{lem:bounds-2-13-26-39}.
	
	\begin{proof}[Proof of \Cref{lem:bounds-2-13-26-39}]
		Let~$S_1\coleq\{2,\dots,12\}$, $S_2\coleq\{13,\dots,25\}$ and~$S_3\coleq\{26,\dots,39\}$. The combination of \Cref{lem:bound-max-mean-weight-cycle} applied with~$(k_1,S_1)$, $(k_2,S_2)$ and~$(k_3,S_3)$ together with the values of~$\bwp{S_1}{k_1}$, $\bwp{S_2}{k_2}$ and~$\bwp{S_3}{k_3}$ returned by \Cref{lem:max-mean-weight-cycles} yields
		\[\sum_{\ell=2}^{12}\abs{\cA^{-}_{\ell}(W)}\le \frac{10n}{11}+\frac{70}{11}\,, \qquad \sum_{\ell=13}^{25}\abs{\cA^{-}_{\ell}(W)}\le \frac{3n}{20}\,,\qquad \textrm{ and} \qquad \sum_{\ell=26}^{39}\abs{\cA^{-}_{\ell}(W)} \le \frac{4n}{45}\,.\]
		Finally, since~$\abs{\cA_\ell(W)}=\abs{\cA^{+}_\ell(W)}+\abs{\cA^{-}_\ell(W)}$ for all~$\ell\in\N$, the desired outcome follows by \Cref{obs:flip-sign-bounds}.
	\end{proof}
	
	\subsection{Proof of Lemma~\ref{lem:gap-lower-bound}} \label{ssec:proof-gap-lower-bound}
	
	In this subsection, we prove \Cref{lem:gap-lower-bound}. We start with a simple technical fact. It says that in a square-completing quadruple of sign 1 in a square-free word $W$, the inserted letter cannot be inserted as the last letter of the first half of the resulting square. 
	
	\begin{lemma} \label{lem:scq-insertion-no-right-edge}
		If~$(a,\ell,b,c)$ is a square-completing quadruple of sign~$1$ in a square-free word~$W$ with $\ell\ge 2$, then~$b\le a+\ell-3$.
	\end{lemma}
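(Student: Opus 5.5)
Sign $1$ means $b\le a+\ell-2$, so the only case to exclude is $b=a+\ell-2$. We argue by contradiction: assume $b=a+\ell-2$, so that the inserted letter $c$ is exactly the last letter of the first half of the square in $W+_bc$.

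We unpack the indices. Set $V\coleq W+_bc$. The letter $c$ sits at gap $b=a+\ell-2$ of $W$, so $V[a+\ell-1]=c$. For positions $j<a+\ell-1$ we have $V[j]=W[j]$, and for $j>a+\ell-1$ we have $V[j]=W[j-1]$. The square condition $V[a,a+\ell)=V[a+\ell,a+2\ell)$ then gives two facts. For $0\le i\le \ell-2$ we get $W[a+i]=W[a+\ell+i-1]$, so the factor $X\coleq W[a,a+\ell-1)$ of length $\ell-1\ge1$ satisfies $W[a+\ell-1,a+2\ell-2)=X$. For the last position we get $c=V[a+\ell-1]=V[a+2\ell-1]=W[a+2\ell-2]$.

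The square $XX$ gives the contradiction. The factor $W[a,a+2\ell-2)$ equals $XX$, and $X$ is nonempty because $\ell\ge2$. Before concluding, we check that all indices lie in $[1,n]$. By \Cref{obs:no-late-inserts}, $a+2\ell-1\le n+1$, so the positions up to $a+2\ell-2\le n$ exist in $W$ (we do not even need the last one). Hence $W$ contains the square $XX$, contradicting square-freeness, and therefore $b\le a+\ell-3$.

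There is no real obstacle. The only care needed is the off-by-one bookkeeping of how positions shift after the insertion, and confirming that $\ell\ge2$ is exactly what makes $X$ nonempty.
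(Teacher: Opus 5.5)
Your proof is correct and follows the paper's argument: both assume $b=a+\ell-2$, use the square condition in $W+_bc$ to get $W[a,a+\ell-1)=W[a+\ell-1,a+2\ell-2)$, and note that this is a nonempty square in $W$ because $\ell\ge2$. Your extra index check via \Cref{obs:no-late-inserts} and the observation $c=W[a+2\ell-2]$ are harmless additions that the paper leaves implicit or omits.
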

	
	\begin{proof}
		By the definition of sign 1, we already have $b\le a+\ell-2$. Suppose that~$b=a+\ell-2$. Then~$W[a,a+\ell-1)=(W+_bc)[a,a+\ell-1)=(W+_bc)[a+\ell,a+2\ell-1)=W[a+\ell-1,a+2\ell-2)$ gives a square in~$W$ since $\ell\ge 2$, which is a contradiction.
	\end{proof}
	
	The following two lemmas represent the technical core of the proof of \Cref{lem:gap-lower-bound}; the first lemma deals with~$\ell=\ell'$ and the second lemma handles~$\ell'\ne\ell$. Each statement in these lemmas is derived by showing that it being false would contradict the square-free nature of the given word or the minimality of the given square-completing quadruples.
	
	\begin{lemma} \label{lem:same-length-mscq-outcomes}
		The following hold for any pair $(a,\ell,b,c)$ and $(a',\ell,b',c')$ of distinct minimal square-completing quadruples of sign 1 in a square-free word $W$.
		\begin{enumerate}
			\item \label{item:same-length-mscq-insertion-offset} $b\ne b'$.
			\item \label{item:same-length-mscq-insertion-no-left-edge} If $b'>b$ and $a'\le a+\ell-1$, then $b'\ge a'$.
			\item \label{item:same-length-mscq-spacing} If $b'>b$, $b\le a+\ell-3$ and $b'\ge a'$, then $a'\ge a+\ell$.
		\end{enumerate}
	\end{lemma}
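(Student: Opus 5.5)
The plan is to translate each square-completing quadruple of sign~$1$ into periodicity statements about $W$ itself. Then each claim follows either by gluing periodic runs into a square of $W$, or by exhibiting a competing quadruple that violates minimality. By minimality with (M3), a minimal sign-$1$ quadruple has the \emph{largest} start among sign-$1$ quadruples with the same $(b,c)$ and $\ell$.

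\textbf{Preliminary dictionary.} Let $(a,\ell,b,c)$ be square-completing of sign~$1$ in square-free $W$.
\begin{itemize}
\item We have $b\ge a-1$: if $b<a-1$, the square sits entirely to the right of the inserted letter, so it is already a square in $W$.
\item Combined with Lemma~\ref{lem:scq-insertion-no-right-edge}, the inserted letter occupies position $b+1\in[a,a+\ell-2]$ of $W+_bc$.
\item Comparing the two halves gives $c=W[b+\ell]$.
\item It also gives $W[i]=W[i+\ell-1]$ for $a\le i\le b$, and $W[i]=W[i+\ell]$ for $b+1\le i\le a+\ell-2$.
\end{itemize}
In particular, $W[b+1,a+2\ell-1)$ has period~$\ell$.

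\textbf{(i)} Suppose $b=b'$. By the dictionary, $c=W[b+\ell]=c'$. Both quadruples are minimal with the same $(b,c)$, the same $\ell$ and sign~$1$, so (M3) applied in both directions gives $a'\le a$ and $a\le a'$. Hence the quadruples coincide, contradicting distinctness.

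\textbf{(ii)} Suppose $b'>b$, $a'\le a+\ell-1$ and $b'<a'$. The dictionary forces $b'=a'-1$, i.e.\ the inserted letter is the first letter of the square. Then $W[a',a'+\ell-1)=W[a'+\ell,a'+2\ell-1)$ and $c'=W[a'+\ell-1]$, so $W[a'-1+1,a'+2\ell-1)$ has period~$\ell$ in the shifted sense. Equivalently, $W[b'+1,a'+2\ell-1)$ has period $\ell$. The first quadruple gives period $\ell$ on $W[b+1,a+2\ell-1)$. These two intervals overlap in $[a',a+2\ell-1)$, whose length is at least $\ell$ exactly because $a'\le a+\ell-1$. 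Overlapping period-$\ell$ runs with overlap at least $\ell$ glue to a period-$\ell$ run on $W[b+1,\max(a,a')+2\ell-1)$, which has length at least $a'+2\ell-2-b\ge 2\ell$ since $a'=b'+1\ge b+2$. This gives a square of length $2\ell$ in $W$, a contradiction.

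\textbf{(iii)} Suppose $b'>b$, $b\le a+\ell-3$, $b'\ge a'$ and $a'<a+\ell$. The second quadruple gives $W[i]=W[i+\ell-1]$ on $[a',b']$ and $W[i]=W[i+\ell]$ on $[b'+1,a'+\ell-2]$. The first gives $W[i]=W[i+\ell]$ on $[b+1,a+\ell-2]$, where $b\le a+\ell-3$ makes this range nonempty.

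Since $b<b'$ and $a'\le a+\ell-1$, the interval $[a',b']$ meets $[b+1,a+\ell-2]$ or lies adjacent to it. On the intersection both periods $\ell$ and $\ell-1$ act. The aim is to show one of two things:
\begin{itemize}
\item the combined periods force a square of period $1$ or $\ell$ in $W$ (Fine--Wilf style); or
\item when the intersection is too short for that, $(a'+1,\ell,b',c')$ is square-completing of sign~$1$. This needs one extra letter equality $W[a'+2\ell-1]=W[a'+\ell]$, which is supplied by the first quadruple's period-$\ell$ relation because $a'+\ell\in[b+1,a+2\ell-1)$. Such a quadruple contradicts the maximal-start minimality (M3) of $(a',\ell,b',c')$. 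Symmetrically, a shifted quadruple $(a-1,\dots)$ or $(a+1,\dots)$ may contradict minimality of the first quadruple.
\end{itemize}

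This case analysis in (iii), namely tracking exactly which index ranges the two period relations cover and choosing between the square and the minimality contradiction, is where I expect the main difficulty. I would first try the shift-by-one minimality argument, and fall back to gluing runs only in the boundary subcases.
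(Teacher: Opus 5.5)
Your parts (i) and (ii) are fine. Part (i) is exactly the paper's argument. In (ii), gluing the period-$\ell$ runs $W[b+1,a+2\ell-1)$ and $W[a',a'+2\ell-1)$ along an overlap of length at least $\ell$ is correct. The paper does the same thing more economically: it uses only the single equality $W[a'-1]=W[a'+\ell-1]$ from the first quadruple, which extends $W[a',a'+\ell-1)=W[a'+\ell,a'+2\ell-1)$ to the square $W[a'-1,a'+\ell-1)=W[a'+\ell-1,a'+2\ell-1)$.

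Part (iii), however, is not proved. You give a plan with an unresolved case analysis, and the plan misses the one observation that makes (iii) immediate. No Fine--Wilf argument and no minimum overlap length is needed, because the two relations are pointwise.

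\begin{itemize}
\item If some $i$ lies in both $[a',b']$ and $[b+1,a+\ell-2]$, then $W[i+\ell-1]=W[i]=W[i+\ell]$. This is a square of length~$2$ in $W$.
\item Taking $i=\max(a',b+1)$, this intersection is nonempty unless $a'=a+\ell-1$.
\item In that remaining case take $i=a'$. The second quadruple gives $W[a']=W[a'+\ell-1]$. Since $b\le a+\ell-3$, the index $a'-1=a+\ell-2$ lies in $[b+1,a+\ell-2]$, so the first quadruple gives $W[a'-1]=W[a'+\ell-1]$. Hence $W[a'-1]=W[a']$.
\end{itemize}

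Minimality plays no role in (iii).

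Your proposed minimality fallback would not have worked either. For $(a'+1,\ell,b',c')$ to be square-completing you need $W[a'+\ell-1]=W[a'+2\ell-1]$; your stated equality is off by one. The first quadruple's relation $W[x]=W[x+\ell]$ holds only for $b+1\le x\le a+\ell-2$, so it supplies this equality only when $a'<a$. That is a case where the intersection is already nonempty. In the critical adjacent case $a'=a+\ell-1$, the index $a'+2\ell-1$ lies beyond the first quadruple's run $W[b+1,a+2\ell-1)$, so that route gives nothing there.
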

	
	\begin{proof}
		(i) Suppose that $b=b'$. We have $c=(W+_bc)[b+1]=(W+_bc)[b+\ell+1]=W[b+\ell]$ and $c'=(W+_bc')[b+1]=(W+_bc')[b+\ell+1]=W[b+\ell]$, so $c=c'$. Since $(a,\ell,b,c)$ and $(a',\ell,b,c)$ are distinct and minimal, we have $a<a'<a$, which is a contradiction.
		
		(ii) By the definition of square-completing quadruple, we already have $b'\ge a'-1$. Suppose that $b'>b$, $a'\le a+\ell-1$ and $b'=a'-1$. Since $b<b'=a'-1\le a+\ell-2$, we have $W[a'-1]=(W+_bc)[a']=(W+_bc)[a'+\ell]=W[a'+\ell-1]$. Furthermore, we have $W[a',a'+\ell-1)=(W+_{b'}c')[a'+1,a'+\ell)=(W+_{b'}c')[a'+\ell+1,a'+2\ell)=W[a'+\ell,a'+2\ell-1)$. But this implies that $W[a'-1,a'+\ell-1)=W[a'+\ell-1,a'+2\ell-1)$ yields a square in $W$, which is a contradiction.
		
		(iii) Suppose that $b'>b$, $b\le a+\ell-3$, $b'\ge a'$ and $a'\le a+\ell-1$. Set $i\coleq\max(a',b+1)$. Since $a'\le i\le b'$, we have $W[i]=(W+_{b'}c')[i]=(W+_{b'}c')[i+\ell]=W[i+\ell-1]$. Suppose further that $a'\le a+\ell-2$. Since $b<i\le a+\ell-2$, we have $W[i]=(W+_bc)[i+1]=(W+_bc)[i+\ell+1]=W[i+\ell]$. But now $W[i+\ell]=W[i+\ell-1]$ yields a square in $W$, which is a contradiction. Hence, we have $a'=a+\ell-1$. Now $b < i = a' = a+\ell-1$, so we have $W[i+\ell-1]=(W+_bc)[i+\ell]=(W+_bc)[i]=W[i-1]$. But $W[i]=W[i-1]$ gives a square in $W$, which is a contradiction.
	\end{proof}
	
	\begin{lemma} \label{lem:diff-length-mscq-outcomes}
		The following hold for any pair~$(a,\ell,b,c)$ and~$(a',\ell',b',c')$ of distinct minimal square-completing quadruples of sign 1 with~$\ell'-\ell-1>0$ in a square-free word~$W$.
		\begin{enumerate}
			\item \label{item:diff-length-mscq-left} $\max(a,a')-1 + \ell'-\ell > \min(b,b')$.
			\item \label{item:diff-length-mscq-right} $\max(b,b') + \ell'-\ell > \min(a+\ell,a'+\ell')-2$.
			\item \label{item:diff-length-mscq-mid} $\max(a'-1,b) + \ell'-\ell-1 > \min(a+\ell-2,b')$.
			\item \label{item:diff-length-mscq-mid-flip} $\max(a-1,b') + \ell'-\ell+1 > \min(a'+\ell'-2,b)$.
			\item \label{item:diff-length-mscq-shifted-left} $\max(a-\ell'+\ell,a')-1 + \ell'-\ell > \min(b-\ell'+\ell,b')$.
			\item \label{item:diff-length-mscq-shifted-right} $\max(b-\ell'+\ell,b') + \ell'-\ell > \min(a+2\ell-\ell',a'+\ell')-2$.
			\item \label{item:diff-length-mscq-shifted-mid} $\max(a'-1,b-\ell'+\ell+1) + \ell'-\ell-1 > \min(a+2\ell-\ell'-1,b')$.
			\item \label{item:diff-length-mscq-augmented-right} $\max(b,b',a'+\ell'-\ell-2)+1 + \ell'-\ell > \min(a+\ell,a'+2\ell'-\ell)-1$.
			\item \label{item:diff-length-mscq-augmented-mid} $\max(b',a-1,a'+\ell'-\ell-1) + \ell'-\ell+1 > \min(b,a'+2\ell'-\ell-1)$.
			\item \label{item:diff-length-mscq-edge} If $\frac{3}{4}\ell'\le \ell<\ell'-1$, $a'-a=4\ell-3\ell'+1$ and $b'=a'+\ell'-\ell-2$, then $b\ne a+2\ell-\ell'-1$. 
			\item \label{item:diff-length-mscq-bounce-A} If $0\le a-a'\le\ell'-\ell$, $\frac{2}{3}\ell'<\ell<\frac{3}{4}\ell'$, $a+2\ell-\ell'-1\le b\le a+2(\ell'-\ell)-2$ and $b'\le a+\ell'-\ell-2$, then $b'\le a+3\ell-2\ell'-2$.
			\item \label{item:diff-length-mscq-bounce-B} If $\ell'-\ell<a-a'\le2(\ell'-\ell)$, $\frac{2}{3}\ell'<\ell<\frac{3}{4}\ell'$, $a+3\ell-2\ell'-1\le b\le a+\ell'-\ell-2$ and $b'\le a+\ell'-\ell-2$, then $b'\le a+3\ell-2\ell'-2$.
		\end{enumerate}
	\end{lemma}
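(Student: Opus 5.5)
Every item of Lemma~\ref{lem:diff-length-mscq-outcomes} will be proved by contradiction with one template. We translate each of the two square-completing quadruples into letter identities in $W$ itself, combine them along a common stretch of positions, and obtain either a square in $W$ or a shorter square-completing quadruple at the same gap, which contradicts minimality via \ref{item:mscq-min-ell}.

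\textbf{Translation step.} For a sign-$1$ quadruple $(a,\ell,b,c)$ we record two families of identities:
\begin{itemize}
\item $W[i]=W[i+\ell-1]$ for $a\le i\le b$, since the inserted letter lies between $i$ and $i+\ell$;
\item $W[i]=W[i+\ell]$ for $b+1\le i\le a+\ell-2$, since both copies are shifted.
\end{itemize}
By Lemma~\ref{lem:scq-insertion-no-right-edge} the second family is nonempty. We also record $c=W[b+\ell]$. The same holds for $(a',\ell',b',c')$ with $\ell'$ in place of $\ell$.

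\textbf{Basic inequalities (i)--(iv).} Each of these has the form $\max(\cdot)+d>\min(\cdot)$ with $d\in\{\ell'-\ell-1,\ \ell'-\ell,\ \ell'-\ell+1\}$. Negating it says that some integer interval $I$ contains at least $d$ consecutive positions. On $I$ one identity from each family holds; which pair depends on the item:
\begin{itemize}
\item (i): periods $\ell-1$ and $\ell'-1$;
\item (ii): periods $\ell$ and $\ell'$;
\item (iii): periods $\ell$ and $\ell'-1$;
\item (iv): periods $\ell-1$ and $\ell'$.
\end{itemize}
Subtracting the two shifts gives $W[j]=W[j+d]$ for all $j$ in a translate of $I$ of length at least $d$. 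Hence $W[j,j+d)=W[j+d,j+2d)$ is a square in $W$, where $d\ge1$ because $\ell'-\ell-1>0$. For each item I will check that the endpoints in the statement are exactly those making this translate fit inside both ranges of validity, including the $-1$ and $-2$ offsets.

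\textbf{Shifted items (v)--(vii).} These use the same argument, but the identities of the shorter quadruple are read on its second half, that is, translated by $\ell$. Comparing them with the first half of the longer square gives intervals shifted by $\ell'-\ell$, which explains the terms $b-\ell'+\ell$ and $a+2\ell-\ell'$.

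\textbf{Augmented items (viii), (ix).} Here the window reaches past $a'+\ell'-\ell$. Instead of a square in $W$, I will aim to show that $(W+_{b'}c')$ contains a square of length $\ell'-\ell$ or $\ell$ through the inserted letter. That gives a square-completing quadruple $(a'',\ell'',b',c')$ with $\ell''<\ell'$, contradicting \ref{item:mscq-min-ell} for $(a',\ell',b',c')$. The symmetric variant contradicts minimality of $(a,\ell,b,c)$ instead. Where the new square has length exactly $\ell$, I will use \ref{item:mscq-same-ell-sign-positive}, comparing $a''$ with $a$.

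\textbf{Boundary items (x)--(xii).} These cover the extremal configurations left open by (i)--(ix). The plan is a ``bounce'': follow a position alternately through the period-$\ell$ (or $\ell-1$) and period-$\ell'$ (or $\ell'-1$) identities two or three times. The hypotheses $\tfrac23\ell'<\ell<\tfrac34\ell'$ guarantee each step stays inside a valid range. The chain should end in either two equal adjacent letters, a square of length $2\ell-\ell'$ or $3\ell-2\ell'$, or the identity $c=W[b+\ell]$ conflicting with $c'=W[b'+\ell']$ so as to produce a shorter completing square. In (x), the equality $b=a+2\ell-\ell'-1$ should collapse the chain to a single repeated letter.

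The main obstacle is this bookkeeping in (x)--(xii), and to a lesser extent in (viii)--(ix). The difficulty is choosing which identities to chain and verifying every index stays in its range. I would first work out small explicit cases, such as $\ell'=7$, $\ell=5$, to see which square or shorter quadruple appears, and then generalise the index arithmetic.
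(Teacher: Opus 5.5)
Your treatment of (i)--(vii) is sound and matches the paper. The identities $W[i]=W[i+\ell-1]$ for $a\le i\le b$ and $W[i]=W[i+\ell]$ for $b+1\le i\le a+\ell-2$, together with their primed analogues, are matched at a common left point in (i)--(iv) and at a common right point in (v)--(vii). This gives a square of half-length $\ell'-\ell$ or $\ell'-\ell\pm1$. (Minor point: Lemma~\ref{lem:scq-insertion-no-right-edge} needs $\ell\ge2$, but the statement allows $\ell=1$. In that case sign $1$ forces $b=a-1$, which is also how the paper disposes of (ix).)

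The genuine gap is that (viii)--(xii) are not proved: you give a strategy and guesses, and several guesses are off. In (viii) and (ix) the paper sets $\bar p=\max(b,b',a'+\ell'-\ell-2)+2$, resp.\ $\bar p=\max(b',a-1,a'+\ell'-\ell-1)+2$. It then pushes the block $(W+_{b'}c')[\bar p,\bar p+\ell'-\ell)$, resp.\ $(W+_{b'}c')[\bar p,\bar p+\ell'-\ell+1)=(W+_bc)[\bar p-1,\bar p+\ell'-\ell)$, forward by $\ell$ through the first square and back by $\ell'$ through the second. The block reappears immediately to its own left. This is a square in $W+_{b'}c'$ of half-length $\ell'-\ell$, resp.\ $\ell'-\ell+1$ (the latter uses $\ell\ge2$). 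It must contain $c'$ because $W$ is square-free, so (M1) fails for $(a',\ell',b',c')$. Your squares of half-length $\ell$ and the appeal to (M3) against $(a,\ell,b,c)$ play no role: a square in $W+_{b'}c'$ only competes with quadruples at gap $b'$ with letter $c'$.

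In (x) nothing collapses to a repeated letter. The hypotheses give $b+\ell=b'+\ell'$ and $b=b'+\ell'-\ell$, hence $W[b]=W[b+\ell-1]=W[b'+\ell'-1]=W[b']$. The two second families then give $W[b'+1,b)=W[b+1,b+\ell'-\ell)$, with the unprimed one used up to exactly $a+\ell-2=b+\ell'-\ell-1$. Together these give $W[b',b)=W[b,b+\ell'-\ell)$.

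For (xi) and (xii) the paper does use a bounce. It passes $W[a+2(\ell'-\ell)-1,a+\ell-1)$ alternately through $W+_bc$ and $W+_{b'}c'$ until it reaches $W[a+\ell-1,a+4\ell-2\ell'-1)$, giving a square of half-length $3\ell-2\ell'$. In (xii), where $b,b'\le a+\ell'-\ell-2$, this chain closes.

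In (xi), however, $b\ge a+2\ell-\ell'-1$. So on the middle block $[a+\ell'-\ell,a+2\ell-\ell')$ the word $W+_bc$ agrees with $W$, while $W+_{b'}c'$ is shifted by one. The printed chain ends one segment at $W[a+\ell'-\ell-1,a+2\ell-\ell'-1)$ but starts the next at $W[a+\ell'-\ell,a+2\ell-\ell')$. Take for instance $\ell=7$, $\ell'=10$, $a'=a$, $b=a+4$, $b'=a$. The identities give $W[a+5]=W[a+2]$ and $W[a+3]=W[a+6]$, but not $W[a+5]=W[a+6]$. The contradiction there comes from a different square, $W[a+7,a+11)=W[a+11,a+15)$.

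So the index bookkeeping you postpone is the real content of (viii)--(xii). For (xi) in particular, it cannot be obtained by transcribing the (xii) chain.
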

	
	\begin{proof}
		(i) Suppose that $\max(a,a')-1 + \ell'-\ell \le \min(b,b')$. Let $\abar\coleq\max(a,a')$. Since $\abar\ge a$ and $\abar-1 + \ell'-\ell \le b$, we have $W[\abar,\abar+\ell'-\ell)=(W+_bc)[\abar,\abar+\ell'-\ell)=(W+_bc)[\abar+\ell,\abar+\ell')=W[\abar+\ell-1,\abar+\ell'-1)$. Since $\abar\ge a'$ and $\abar-1 + \ell'-\ell \le b'$, we have $W[\abar,\abar+\ell'-\ell)=(W+_{b'}c')[\abar,\abar+\ell'-\ell)=(W+_{b'}c')[\abar+\ell',\abar+2\ell'-\ell)=W[\abar+\ell'-1,\abar+2\ell'-\ell-1)$. But now $W[\abar+\ell-1,\abar+\ell'-1)=W[\abar+\ell'-1,\abar+2\ell'-\ell-1)$ gives a square in $W$, which is a contradiction.
		
		(ii) Suppose that $\max(b,b') + \ell'-\ell \le \min(a+\ell,a'+\ell')-2$. Let $\bbar\coleq\max(b,b')+2$. Since $\bbar\ge b+2$ and $\bbar + \ell'-\ell \le a+\ell$, we have $W[\bbar-1,\bbar+\ell'-\ell-1)=(W+_bc)[\bbar,\bbar+\ell'-\ell)=(W+_bc)[\bbar+\ell,\bbar+\ell')=W[\bbar+\ell-1,\bbar+\ell'-1)$. Since $\bbar\ge b'+2$ and $\bbar + \ell'-\ell \le a'+\ell'$, we have $W[\bbar-1,\bbar+\ell'-\ell-1)=(W+_{b'}c')[\bbar,\bbar+\ell'-\ell)=(W+_{b'}c')[\bbar+\ell',\bbar+2\ell'-\ell)=W[\bbar+\ell'-1,\bbar+2\ell'-\ell-1)$. But now $W[\bbar+\ell-1,\bbar+\ell'-1)=W[\bbar+\ell'-1,\bbar+2\ell'-\ell-1)$ gives a square in $W$, which is a contradiction.
		
		(iii) Suppose that $\max(a'-1,b) + \ell'-\ell-1 \le \min(a+\ell-2,b')$. Let $\pbar\coleq\max(a',b+1)$. Since $\pbar\ge b+1$ and $\pbar + \ell'-\ell \le a+\ell$, we have $W[\pbar,\pbar+\ell'-\ell-1)=(W+_bc)[\pbar+1,\pbar+\ell'-\ell)=(W+_bc)[\pbar+\ell+1,\pbar+\ell')=W[\pbar+\ell,\pbar+\ell'-1)$. Since $\pbar\ge a'$ and $\pbar + \ell'-\ell-2 \le b'$, we have $W[\pbar,\pbar+\ell'-\ell-1)=(W+_{b'}c')[\pbar,\pbar+\ell'-\ell-1)=(W+_{b'}c')[\pbar+\ell',\pbar+2\ell'-\ell-1)=W[\pbar+\ell'-1,\pbar+2\ell'-\ell-2)$. But now $W[\pbar+\ell,\pbar+\ell'-1)=W[\pbar+\ell'-1,\pbar+2\ell'-\ell-2)$ gives a square in $W$, which is a contradiction.
		
		(iv) Suppose that $\max(a-1,b') + \ell'-\ell+1 \le \min(a'+\ell'-2,b)$. Let $\pbar\coleq\max(a,b'+1)$. Since $\pbar\ge a$ and $\pbar + \ell'-\ell \le b$, we have $W[\pbar,\pbar+\ell'-\ell+1)=(W+_bc)[\pbar,\pbar+\ell'-\ell+1)=(W+_bc)[\pbar+\ell,\pbar+\ell'+1)=W[\pbar+\ell-1,\pbar+\ell')$. Since $\pbar\ge b'+1$ and $\pbar + \ell'-\ell \le a'+\ell'-2$, we have $W[\pbar,\pbar+\ell'-\ell+1)=(W+_{b'}c')[\pbar+1,\pbar+\ell'-\ell+2)=(W+_{b'}c')[\pbar+\ell'+1,\pbar+2\ell'-\ell+2)=W[\pbar+\ell',\pbar+2\ell'-\ell+1)$. But now $W[\pbar+\ell-1,\pbar+\ell')=W[\pbar+\ell',\pbar+2\ell'-\ell+1)$ gives a square in $W$, which is a contradiction.
		
		(v) Suppose that $\max(a-\ell'+\ell,a')-1 + \ell'-\ell \le \min(b-\ell'+\ell,b')$. Let $\abar\coleq\max(a-\ell'+\ell,a')$. Since $\abar + \ell'-\ell \ge a$ and $\abar-1 + 2(\ell'-\ell) \le b$, we have $W[\abar + \ell'-\ell,\abar + 2(\ell'-\ell))=(W+_bc)[\abar + \ell'-\ell,\abar + 2(\ell'-\ell))=(W+_bc)[\abar + \ell',\abar + 2\ell'-\ell)=W[\abar+\ell'-1,\abar+2\ell'-\ell-1)$. Since $\abar\ge a'$ and $\abar-1 + \ell'-\ell \le b'$, we have $W[\abar,\abar+\ell'-\ell)=(W+_{b'}c')[\abar,\abar+\ell'-\ell)=(W+_{b'}c')[\abar+\ell',\abar+2\ell'-\ell)=W[\abar+\ell'-1,\abar+2\ell'-\ell-1)$. But now $W[\abar,\abar+\ell'-\ell)=W[\abar+\ell'-\ell,\abar+2(\ell'-\ell))$ gives a square in $W$, which is a contradiction.
		
		(vi) Suppose that $\max(b-\ell'+\ell,b') + \ell'-\ell \le \min(a+2\ell-\ell',a'+\ell')-2$. Let $\bbar\coleq\max(b-\ell'+\ell,b')+2$. Since $\bbar + \ell'-\ell \ge b+2$ and $\bbar + 2(\ell'-\ell) \le a+\ell$, we have $W[\bbar-1 + \ell'-\ell,\bbar-1 + 2(\ell'-\ell))=(W+_bc)[\bbar + \ell'-\ell,\bbar + 2(\ell'-\ell))=(W+_bc)[\bbar + \ell',\bbar + 2\ell'-\ell)=W[\bbar-1 + \ell',\bbar-1 + 2\ell'-\ell)$. Since $\bbar\ge b'+2$ and $\bbar + \ell'-\ell \le a'+\ell'$, we have $W[\bbar-1,\bbar-1 +\ell'-\ell)=(W+_{b'}c')[\bbar,\bbar +\ell'-\ell)=(W+_{b'}c')[\bbar+\ell',\bbar +2\ell'-\ell)=W[\bbar-1+\ell',\bbar-1 +2\ell'-\ell)$. But now $W[\bbar-1,\bbar-1 +\ell'-\ell)=W[\bbar-1 + \ell'-\ell,\bbar-1 + 2(\ell'-\ell))$ gives a square in $W$, which is a contradiction.
		
		(vii) Suppose that $\max(a'-1,b-\ell'+\ell+1) + \ell'-\ell-1 \le \min(a+2\ell-\ell'-1,b')$. Let $\pbar\coleq\max(a'-1,b-\ell'+\ell+1)$. Since $\pbar+\ell'-\ell\ge b+1$ and $\pbar + 2(\ell'-\ell) \le a+\ell$, we have $W[\pbar+\ell'-\ell,\pbar+2(\ell'-\ell)-1)=(W+_bc)[\pbar+\ell'-\ell+1,\pbar+2(\ell'-\ell))=(W+_bc)[\pbar+\ell'+1,\pbar+2\ell'-\ell)=W[\pbar+\ell',\pbar+2\ell'-\ell-1)$. Since $\pbar+1\ge a'$ and $\pbar + \ell'-\ell-1 \le b'$, we have $W[\pbar+1,\pbar+\ell'-\ell)=(W+_{b'}c')[\pbar+1,\pbar+\ell'-\ell)=(W+_{b'}c')[\pbar+\ell'+1,\pbar+2\ell'-\ell)=W[\pbar+\ell',\pbar+2\ell'-\ell-1)$. But now $W[\pbar+1,\pbar+\ell'-\ell)=W[\pbar+\ell'-\ell,\pbar+2(\ell'-\ell)-1)$ gives a square in $W$, which is a contradiction.
		
		(viii) Suppose that $\max(b,b',a'+\ell'-\ell-2)+1 + \ell'-\ell \le \min(a+\ell,a'+2\ell'-\ell)-1$. Let $\pbar\coleq\max(b,b',a'+\ell'-\ell-2)+2$. Since $\pbar\ge\max(b,b')+2$ and $\pbar+\ell'-\ell\le a+\ell$, we have $(W+_{b'}c')[\pbar,\pbar+\ell'-\ell) = (W+_bc)[\pbar,\pbar+\ell'-\ell) = (W+_bc)[\pbar+\ell,\pbar+\ell')$. Since $\pbar-\ell'+\ell\ge a'$ and $\pbar\le a'+\ell'-2$, we have $(W+_bc)[\pbar+\ell,\pbar+\ell') = (W+_{b'}c')[\pbar+\ell,\pbar+\ell') = (W+_{b'}c')[\pbar-\ell'+\ell,\pbar)$. Now $(W+_{b'}c')[\pbar,\pbar+\ell'-\ell)=(W+_{b'}c')[\pbar-\ell'+\ell,\pbar)$ gives a square in $W+_{b'}c'$, so $(\pbar-\ell'+\ell,\ell'-\ell,b',c')$ is a square-completing quadruple of sign 1 in $W$. But this contradicts the minimality of the square-completing quadruple $(a',\ell',b',c')$ of sign 1 in $W$.
		 
		(ix) If $\ell=1$, then $(a,\ell,b,c)$ being of sign 1 implies $b\le a-1$ which already implies the assertion. Thus, suppose that $\ell\ge 2$ and that $\max(b',a-1,a'+\ell'-\ell-1) + \ell'-\ell+1 \le \min(b,a'+2\ell'-\ell-1)$. Let $\pbar\coleq\max(b',a-1,a'+\ell'-\ell-1)+2$. Since $\pbar\ge\max(b',a-1)+2$ and $\pbar+\ell'-\ell\le b+1$, we have $(W+_{b'}c')[\pbar,\pbar+\ell'-\ell+1) = (W+_bc)[\pbar-1,\pbar+\ell'-\ell) = (W+_bc)[\pbar+\ell-1,\pbar+\ell')$. Since $\pbar-\ell'+\ell\ge a'+1$ and $\pbar\le a'+\ell'$, we have $(W+_bc)[\pbar+\ell-1,\pbar+\ell') = (W+_{b'}c')[\pbar+\ell-1,\pbar+\ell') = (W+_{b'}c')[\pbar-\ell'+\ell-1,\pbar)$. Now $(W+_{b'}c')[\pbar,\pbar+\ell'-\ell+1)=(W+_{b'}c')[\pbar-\ell'+\ell-1,\pbar)$ gives a square in $W+_{b'}c'$, so $(\pbar-\ell'+\ell-1,\ell'-\ell+1,b',c')$ is a square-completing quadruple of sign 1 in $W$. But this, together with $\ell\ge2$, contradicts the minimality of the square-completing quadruple $(a',\ell',b',c')$ of sign 1 in $W$.
		
		(x) Suppose that $\frac{3}{4}\ell'\le \ell<\ell'-1$, $a'-a=4\ell-3\ell'+1$, $b'=a'+\ell'-\ell-2$ and $b=a+2\ell-\ell'-1$. Since $b+\ell=b'+\ell'$, $a\le b<a+\ell$ and $a'\le b'<a'+\ell'$, we have $W[b]=(W+_bc)[b]=(W+_bc)[b+\ell]=(W+_{b'}c')[b'+\ell']=(W+_{b'}c')[b']=W[b']$. Since $b+1\le a'+\ell'$, we have $W[b'+1,b)=(W+_{b'}c')[b'+2,b+1)=(W+_{b'}c')[b'+\ell'+2,b+\ell'+1)=W[b'+\ell'+1,b+\ell')$. Since $b'+\ell'-\ell\ge b$ and $b+\ell'-\ell< a+\ell$, we have $W[b'+\ell'-\ell+1,b+\ell'-\ell)=W[b+1,b+\ell'-\ell)=(W+_bc)[b'+\ell'-\ell+2,b+\ell'-\ell+1)=(W+_bc)[b'+\ell'+2,b+\ell'+1)=W[b'+\ell'+1,b+\ell')$. But now $W[b',b)=W[b,b+\ell'-\ell)$ gives a square in $W$, which is a contradiction.
		
		(xi) Suppose that $0\le a-a'\le\ell'-\ell$, $\frac{2}{3}\ell'<\ell<\frac{3}{4}\ell'$, $a+2\ell-\ell'-1\le b\le a+2(\ell'-\ell)-2$ and $a+3\ell-2\ell'-1\le b'\le a+\ell'-\ell-2$. Since $a+2(\ell'-\ell)-1\ge b+1$, we have $W[a+2(\ell'-\ell)-1,a+\ell-1)=(W+_bc)[a+2(\ell'-\ell),a+\ell)=(W+_bc)[a+2\ell'-\ell,a+2\ell)$. Since $a+\ell'-\ell-1\ge b'+1$ and $a+2\ell\le a'+2\ell'$, we have $(W+_bc)[a+2\ell'-\ell,a+2\ell) = (W+_{b'}c')[a+2\ell'-\ell,a+2\ell) = (W+_{b'}c')[a+\ell'-\ell,a+2\ell-\ell') = W[a+\ell'-\ell-1,a+2\ell-\ell'-1)$. Since $a+2\ell-\ell'-1\le b$, we have $W[a+\ell'-\ell,a+2\ell-\ell') = (W+_bc)[a+\ell'-\ell,a+2\ell-\ell') = (W+_bc)[a+\ell',a+3\ell-\ell')$. Since $a\ge a'$ and $b'\ge a+3\ell-2\ell'-1$, we have $(W+_bc)[a+\ell',a+3\ell-\ell') = (W+_{b'}c')[a+\ell',a+3\ell-\ell') = (W+_{b'}c')[a,a+3\ell-2\ell')$. Since $b\ge a+3\ell-2\ell'-1$, we have $(W+_{b'}c')[a,a+3\ell-2\ell') = (W+_bc)[a,a+3\ell-2\ell') = (W+_bc)[a+\ell,a+4\ell-2\ell')=W[a+\ell-1,a+4\ell-2\ell'-1)$. But now $W[a+2(\ell'-\ell)-1,a+\ell-1)=W[a+\ell-1,a+4\ell-2\ell'-1)$ gives a square in $W$, which is a contradiction.
		
		(xii) Suppose that $\ell'-\ell<a-a'\le2(\ell'-\ell)$, $\frac{2}{3}\ell'<\ell<\frac{3}{4}\ell'$, $a+3\ell-2\ell'-1\le b\le a+\ell'-\ell-2$ and $a+3\ell-2\ell'-1\le b'\le a+\ell'-\ell-2$. Since $a+2(\ell'-\ell)\ge b+1$, we have $W[a+2(\ell'-\ell)-1,a+\ell-1)=(W+_bc)[a+2(\ell'-\ell),a+\ell)=(W+_bc)[a+2\ell'-\ell,a+2\ell)$. Since $a+\ell'-\ell\ge b'+2$ and $a+2\ell\le a'+2\ell'$, we have $(W+_bc)[a+2\ell'-\ell,a+2\ell) = (W+_{b'}c')[a+2\ell'-\ell,a+2\ell) = (W+_{b'}c')[a+\ell'-\ell,a+2\ell-\ell')$. Since $b\le a+\ell'-\ell-2$, we have $(W+_bc)[a+\ell'-\ell,a+2\ell-\ell') = (W+_bc)[a+\ell',a+3\ell-\ell')$. Since $a\ge a'$ and $b'\ge a+3\ell-2\ell'-1$, we have $(W+_bc)[a+\ell',a+3\ell-\ell') = (W+_{b'}c')[a+\ell',a+3\ell-\ell') = (W+_{b'}c')[a,a+3\ell-2\ell')$. Since $b\ge a+3\ell-2\ell'-1$, we have $(W+_{b'}c')[a,a+3\ell-2\ell') = (W+_bc)[a,a+3\ell-2\ell') = (W+_bc)[a+\ell,a+4\ell-2\ell')=W[a+\ell-1,a+4\ell-2\ell'-1)$. But now $W[a+2(\ell'-\ell)-1,a+\ell-1)=W[a+\ell-1,a+4\ell-2\ell'-1)$ gives a square in $W$, which is a contradiction.
	\end{proof}
	
	We are now ready to prove \Cref{lem:gap-lower-bound}.
	
	\begin{proof}[Proof of \Cref{lem:gap-lower-bound}]
		Suppose that $\ell\in\N\setminus\{1\}$ and that there are distinct minimal square-completing quadruples $(a,\ell,b,c)$ and $(a',\ell,b',c')$ of sign 1 in a square-free word $W$ such that $|a-a'|\le\ell-1$. By Lemma~\ref{lem:same-length-mscq-outcomes}\ref{item:same-length-mscq-insertion-offset} we have $b\ne b'$; without loss of generality we have $b'>b$. By Lemma~\ref{lem:scq-insertion-no-right-edge} we have $b\le a+\ell-3$, and by Lemma~\ref{lem:same-length-mscq-outcomes}\ref{item:same-length-mscq-insertion-no-left-edge} we have $b'\ge a'$. Now by Lemma~\ref{lem:same-length-mscq-outcomes}\ref{item:same-length-mscq-spacing} we have $a'\ge a+\ell$, which is a contradiction. Hence, we have~$\f(\ell,\ell)\ge\ell$.
		
		Now suppose that there are distinct minimal square-completing quadruples $(a,\ell,b,c)$ and $(a',\ell',b',c')$ of sign 1 in a square-free word $W$ such that $\ell+1<\ell'\le\frac{4}{3}\ell$ and $0\le a'-a\le4\ell-3\ell'+1$. In particular, we have $\max(a,a')=a'$ and $\min(a+\ell,a'+\ell')=a+\ell$. By Lemma~\ref{lem:diff-length-mscq-outcomes}\ref{item:diff-length-mscq-left} and~\ref{item:diff-length-mscq-right} we have
		\begin{align}
			\min(b,b')&\le a'+\ell'-\ell-2\,\textrm{ and} \label{eq:diff-length-mscq-dash-left} \\
			\max(b,b')&\ge a+2\ell-\ell'-1. \, \label{eq:diff-length-mscq-dash-right}
		\end{align}
		Suppose that $b\le b'$. Since $\ell<\ell'-1$, by~\eqref{eq:diff-length-mscq-dash-right} we have $a+2\ell-\ell'-1\le \min(a+\ell-2,b')$. By Lemma~\ref{lem:diff-length-mscq-outcomes}\ref{item:diff-length-mscq-mid} we have $\min(a+\ell-2,b') < \max(a'-1,b)+\ell'-\ell-1$. Since $\ell<\ell'-1$, by~\eqref{eq:diff-length-mscq-dash-left} we have $\max(a'-1,b)+\ell'-\ell-1\le a'+2(\ell'-\ell)-3$. Putting them all together, we obtain $a+2\ell-\ell'-1<a'+2(\ell'-\ell)-3$, which implies $a'-a>4\ell-3\ell'+2$. This is a contradiction, so we have $b>b'$. Since $\ell<\ell'-1$, by~\eqref{eq:diff-length-mscq-dash-right} we have $a+2\ell-\ell'-1\le \min(a'+\ell'-2,b)$. By Lemma~\ref{lem:diff-length-mscq-outcomes}\ref{item:diff-length-mscq-mid-flip} we have $\min(a'+\ell'-2,b) \le \max(a-1,b')+\ell'-\ell$. Since $\ell<\ell'-1$ and $a\le a'$, by~\eqref{eq:diff-length-mscq-dash-left} we have $\max(a-1,b')+\ell'-\ell\le a'+2(\ell'-\ell)-2$. Since $a'-a\le4\ell-3\ell'+1$, we have $a'+2(\ell'-\ell)-2 \le a+2\ell-\ell'-1$. Putting them all together implies that all the inequalities must in fact be equalities, so we have $a'-a=4\ell-3\ell'+1$, $b'=a'+\ell'-\ell-2$ and $b=a+2\ell-\ell'-1$. But this contradicts Lemma~\ref{lem:diff-length-mscq-outcomes}\ref{item:diff-length-mscq-edge}, so~$\f(\ell,\ell') \ge 4\ell-3\ell'+2$ for all~$\ell'\in\N\sm\{1\}$ satisfying~$\ell+1<\ell'\le\frac{4}{3}\ell$.
		
		Suppose that there are distinct minimal square-completing quadruples $(a,\ell,b,c)$ and $(a,\ell',b',c')$ of sign 1 in a square-free word $W$ such that $\frac{4}{3}\ell<\ell'<\frac{3}{2}\ell$. By applying Lemmas~\ref{lem:diff-length-mscq-outcomes}\ref{item:diff-length-mscq-left}, \ref{item:diff-length-mscq-right} and~\ref{item:diff-length-mscq-mid-flip} we have, respectively,
		\begin{align}
			\min(b,b')&\le a+\ell'-\ell-2\,, \label{eq:diff-length-mscq-same-left} \\
			\max(b,b')&\ge a+2\ell-\ell'-1\,\textrm{ and} \label{eq:diff-length-mscq-same-right} \\
			\min(a+\ell'-2,b) &< \max(a-1,b') + \ell'-\ell+1\,.  \label{eq:diff-length-mscq-same-mid-flip}
		\end{align}
		Suppose that $b\le b'$. By~\eqref{eq:diff-length-mscq-same-right} we have $a+2\ell-\ell'-1 = \min(a+2\ell-\ell'-1,b')$ and by~\eqref{eq:diff-length-mscq-same-left} we have $\max(a+\ell'-\ell-2,b)= a+\ell'-\ell-2$, so by applying Lemma~\ref{lem:diff-length-mscq-outcomes}\ref{item:diff-length-mscq-shifted-mid} and simplifying, we obtain $3\ell-2\ell'+1<0$. This is a contradiction, so we have $b>b'$. Since $3\ell-2\ell'>0$ and $b\le a+\ell-2\le a+\ell'-2$, by~\eqref{eq:diff-length-mscq-same-right} and~\eqref{eq:diff-length-mscq-same-mid-flip} we have 
		\[a+\ell'-\ell\le a+2\ell-\ell'-1\le b=\min(a+\ell'-2,b)<\max(a-1,b') + \ell'-\ell+1\,.\]
		This implies $a\le\max(a-1,b')$, so $\max(a-1,b')=b'$. Combining further with~\eqref{eq:diff-length-mscq-same-left} yields
		\[a+2\ell-\ell'-1\le b\le b'+\ell'-\ell\le a+2(\ell'-\ell)-2.\]
		Hence, we obtain $4\ell-3\ell'<0$, $b\le a+2(\ell'-\ell)-2$ and $b'\ge a+3\ell-2\ell'-1$. But this contradicts Lemma~\ref{lem:diff-length-mscq-outcomes}\ref{item:diff-length-mscq-bounce-A}, so~$\f(\ell,\ell') \ge 1$ for all~$\ell'\in\N\sm\{1\}$ satisfying~$\frac{4}{3}\ell<\ell'<\frac{3}{2}\ell$.
		
		Suppose that we have distinct minimal square-completing quadruples $(a,\ell,b,c)$ and $(a',\ell',b',c')$ of sign 1 such that $\frac{2}{3}\ell<\ell'<\ell-1$ and $0\le a'-a<\ell'$. In particular, we have $\max(a,a')=a'$. By Lemmas~\ref{lem:diff-length-mscq-outcomes}\ref{item:diff-length-mscq-left}, \ref{item:diff-length-mscq-right} and~\ref{item:diff-length-mscq-mid-flip} we have, respectively,
		\begin{align}
			\min(b,b')&\le a'+\ell-\ell'-2\,, \label{eq:diff-length-mscq-left} \\
			\max(b,b')&\ge \min(a+\ell,a'+\ell')+\ell'-\ell-1\,\textrm{ and} \label{eq:diff-length-mscq-right} \\
			\min(a+\ell-2,b') &< \max(a'-1,b) + \ell-\ell'+1\,.  \label{eq:diff-length-mscq-mid-flip}
		\end{align}
		
		Suppose that $b'\le b$. By Lemma~\ref{lem:diff-length-mscq-outcomes}\ref{item:diff-length-mscq-shifted-mid} we have 
		\begin{equation} \label{eq:diff-length-mscq-shifted-mid}
			\min(a'+2\ell'-\ell-1,b) < \max(a+\ell-\ell'-2,b')\,.
		\end{equation}
		Suppose further that $a'+\ell'\le a+\ell$. By~\eqref{eq:diff-length-mscq-right} we have $a'+2\ell'-\ell-1 = \min(a'+2\ell'-\ell-1,b)$. Since $a\le a'$, by~\eqref{eq:diff-length-mscq-left} we have $\max(a+\ell-\ell'-2,b')\le a'+\ell-\ell'-2$. Putting both inequalities together with~\eqref{eq:diff-length-mscq-shifted-mid} and simplifying, we obtain $3\ell'-2\ell+1<0$. This is a contradiction, so we have $a'+\ell'>a+\ell$. Since $a<a'-\ell+\ell'\le b'-\ell+\ell'+1$, we have $\max(a+\ell-\ell'-2,b')=b'$. Since $3\ell'-2\ell-1\ge0$, by~\eqref{eq:diff-length-mscq-left} we have $b'\le b'+3\ell'-2\ell+1\le a'+2\ell'-\ell-1$. Putting both inequalities together with~\eqref{eq:diff-length-mscq-shifted-mid} and $b'\le b$, we obtain
		\[ b' \le \min(a'+2\ell'-\ell-1,b) < \max(a+\ell-\ell'-2,b') = b'.\]
		This is a contradiction, so we have $b'>b$.
		
		Suppose that $a'+\ell'\le a+\ell$. Since $3\ell'-2\ell>0$ and $b'\le a'+\ell'-2\le a+\ell-2$, by~\eqref{eq:diff-length-mscq-right} we have $a'+\ell-\ell'\le a'+2\ell'-\ell-1\le b'=\min(a+\ell-2,b')$. Putting this together with~\eqref{eq:diff-length-mscq-mid-flip} yields $a'\le\max(a'-1,b)$, so $\max(a'-1,b)=b$. Combining further with~\eqref{eq:diff-length-mscq-left}, we obtain
		\[a'+2\ell'-\ell-1\le b'\le b+\ell-\ell'\le a'+2(\ell-\ell')-2.\]
		Hence, we obtain $4\ell'-3\ell<0$, $b'\le a'+2(\ell-\ell')-2$ and $b\ge a'+3\ell'-2\ell-1$. But this contradicts Lemma~\ref{lem:diff-length-mscq-outcomes}\ref{item:diff-length-mscq-bounce-A}, so we have $a'+\ell'>a+\ell$. By Lemma~\ref{lem:diff-length-mscq-outcomes}\ref{item:diff-length-mscq-shifted-left} we have
		\begin{equation} \label{eq:diff-length-mscq-shifted-left}
			a'-1 > \min(b'-\ell+\ell',b)\,,
		\end{equation}
		and by Lemma~\ref{lem:diff-length-mscq-outcomes}\ref{item:diff-length-mscq-shifted-right} we have
		\begin{equation} \label{eq:diff-length-mscq-shifted-right}
			\max(b',b+\ell-\ell') > \min(a'+2\ell'-\ell,a+\ell)-2\,.
		\end{equation}
		
		Suppose that $b'\ge a+\ell-2$, and suppose further that $b\le a'-1$. By~\eqref{eq:diff-length-mscq-mid-flip} we have $a'-a=\ell'-1$. Since $3\ell'-2\ell-1\ge0$, we have $a'+2\ell'-\ell\ge a'+\ell-\ell'+1 = a+\ell$. Furthermore, we have $b'\ge a+\ell-2=a'+\ell-\ell'-1\ge b+\ell-\ell'$. Hence, by~\eqref{eq:diff-length-mscq-shifted-right} we have $b'\ge a+\ell-1$. Now since $\ell-2\ell'+1\le0$ and by~\eqref{eq:diff-length-mscq-mid-flip} we have $a'-a=\ell'-1$, we have $\max(a'-1,b,a+\ell-\ell'-1) + \ell-\ell'+1 = a'+\ell-\ell' = a+\ell-1$. Since $\ell'<\ell-1$ and $b'\ge a+\ell-1$, we have $\min(b',a+2\ell-\ell'-1) \ge a+\ell-1$. By Lemma~\ref{lem:diff-length-mscq-outcomes}\ref{item:diff-length-mscq-augmented-mid} we have
		\[a+\ell-1=\max(b,a'-1,a+\ell-\ell'-1) + \ell-\ell'+1 > \min(b',a+2\ell-\ell'-1)\ge a+\ell-1,\]
		which gives a contradiction.
		
		Hence, we have $b\ge a'$. By~\eqref{eq:diff-length-mscq-shifted-left} we have $\min(b'-\ell+\ell',b)\le a'-2$, which implies $b'-\ell+\ell'\le a'-2$. Putting this together with $b'\ge a+\ell-2$ and $a'-a\le\ell'-1$, we obtain $a+\ell-2\le b'\le a'+\ell-\ell'-2\le a+\ell-3$. This gives a contradiction, so $b'\le a+\ell-3$.
		
		Suppose that $b\le a'-1$, and suppose further that $a'-a\le2(\ell-\ell')$. In particular, we have $\min(a'+2\ell'-\ell,a+\ell)=a'+2\ell'-\ell$. By~\eqref{eq:diff-length-mscq-shifted-right} we have $a'+2\ell'-\ell-1\le \max(b',b+\ell-\ell')$, and by~\eqref{eq:diff-length-mscq-mid-flip} we have $b'\le a'+\ell-\ell'-1$. Since $b\ge a-1$, we obtain
		\[a'+2\ell'-\ell-1\le\max(b',b+\ell-\ell')\le\max(a,a')+\ell-\ell'-1\le a'+2\ell'-\ell-2\,,\]
		which gives a contradiction. Hence, $a'-a>2(\ell-\ell')$. Now by~\eqref{eq:diff-length-mscq-shifted-right} we have
		\[\max(b',b+\ell-\ell')-1\ge a+\ell-2\ge b',\]
		so $a'+\ell-\ell'-2\ge b+\ell-\ell'-1\ge a+\ell-2$. This implies $a'-a\ge\ell'$, which gives a contradiction. Hence, we have $b\ge a'$.
		
		Since by~\eqref{eq:diff-length-mscq-mid-flip} we have $b+\ell-\ell'\ge b'$, by~\eqref{eq:diff-length-mscq-shifted-left} we have $b'\le a'+\ell-\ell'-2$ and by~\eqref{eq:diff-length-mscq-shifted-right} we have $b\ge\min(a'+3\ell'-2\ell,a+\ell')-1$. Suppose that $a'-a\le2(\ell-\ell')$. In particular, we have $\min(a'+3\ell'-2\ell,a+\ell')=a'+3\ell'-2\ell$. By~\eqref{eq:diff-length-mscq-left} we have
		\[a'+3\ell'-2\ell-1\le b\le a'+\ell-\ell'-2,\]
		so we have $4\ell'-3\ell<0$. But this contradicts Lemma~\ref{lem:diff-length-mscq-outcomes}\ref{item:diff-length-mscq-bounce-B}, so we have $a'-a>2(\ell-\ell')$. In particular, we have $\min(a'+3\ell'-2\ell,a+\ell')=a+\ell'$. By~\eqref{eq:diff-length-mscq-left} we have
		\[a+\ell'-1\le b\le a'+\ell-\ell'-2,\]
		so we have $a'-a>2\ell'-\ell$. Now since $a'-a>2(\ell-\ell')$, we have $\min(a'+\ell',a+2\ell-\ell')=a+2\ell-\ell'$; since $3\ell'-2\ell,\ell-\ell'>0$, we have $a+\ell-\ell'-2\le a+\ell'-1\le b<b'$, which gives $\max(b,b',a+\ell-\ell'-2)=b'$. Hence, by Lemma~\ref{lem:diff-length-mscq-outcomes}\ref{item:diff-length-mscq-augmented-right} we have $a+2\ell-\ell'=\min(a'+\ell',a+2\ell-\ell')<\max(b,b',a+\ell-\ell'-2)+2+\ell-\ell' = b'+2+\ell-\ell' \le a'+2(\ell-\ell')$, which implies $a'-a>\ell'$. But this is a contradiction, so~$\f(\ell,\ell') \ge \ell'$ for all~$\ell'\in\N\sm\{1\}$ satisfying~$\frac{2}{3}\ell<\ell'<\ell-1$.
		
		Finally, note that~$\f(\ell,\ell') \ge 0$ for all~$\ell,\ell'\in\N\sm\{1\}$.
	\end{proof}
	
	\section{Concluding remarks}
	
	In this paper, we proved that there are no extremal square-free words over alphabets of size at least~$5$, thereby confirming Conjecture~\ref{conj:no-extremal-squarefree-4} for all~$k\ge5$. The only case which remains open is that for~$k=4$, which corresponds to the original conjecture of Grytczuk, Kordulewski and Niewiadomski in~\cite{GrytczukKordulewskiNiewiadomski}.
	
	\begin{conjecture}[{\cite{GrytczukKordulewskiNiewiadomski}}] \label{conj:no-extremal-squarefree-4-only}
		There is no extremal square-free word over an alphabet of size~$4$.
	\end{conjecture}
	
	Broadly speaking, our proof works for~$k\ge5$ because the coefficient of~$n$ in~\Cref{eq:mscq-collection-bounds} is strictly and meaningfully less than~$5$. A first approach to tackle the case~$k=4$ would be to try to obtain a smaller coefficient which is strictly less than~$4$ computationally, but it is not entirely clear whether such a substantial gain would be possible.
	
	\section*{Acknowledgements}
	
	Eng Keat Hng is supported by the Institute for Basic Science (IBS-R029-C4). Silas Rathke is funded by the Deutsche Forschungsgemeinschaft (DFG, German Research Foundation) under Germany's Excellence Strategy – The Berlin Mathematics Research Center MATH+ (EXC-2046/1, project ID: 390685689). We thank the HPC service of the Freie Universit\"at Berlin for the computation time provided (10.17169/refubium-26754). We thank David Lecumberri Irisarri, Katherine Perry, and Bjarne Schülke for stimulating discussions at the early stages of this project. We are grateful for the support of the 1st Early Career Workshop in Extremal Combinatorics 2025 organized at ECOPRO, IBS.
	
	\bibliographystyle{amsplain}
	\bibliography{bib}
	
\end{document}